\newcommand{\url}{\texttt}}
 \theoremstyle{plain}
 \newtheorem{thm}{Theorem}[section]
 \numberwithin{equation}{section} 
 \newtheorem{conj}{Conjecture}[section]
 \numberwithin{equation}{section}
 \numberwithin{figure}{section} 
 \theoremstyle{plain}
 \theoremstyle{definition}
 \newtheorem{defn}[thm]{Definition}
 \theoremstyle{plain}
 \newtheorem{lem}[thm]{Lemma} 
 \theoremstyle{remark}
 \newtheorem{rem}[thm]{Remark}
 \theoremstyle{plain}
 \theoremstyle{plain}
 \newtheorem{prop}[thm]{Proposition} 
 \theoremstyle{plain}
 \newtheorem{cor}[thm]{Corollary} 
 \theoremstyle{plain}
 \theoremstyle{definition}
 \newtheorem{example}[thm]{Example}
\newcommand{\card}[1]{\left| #1 \right|}
\DeclareMathOperator{\codim}{codim}
\def\R{\mathbb {R}}
\def\N{\mathbb {N}}
\def\Z{{\mathbb Z}}
\def\subgp{<}
\def\ideal{\lhd}
\def\isom{\simeq}
\def\ie{i.e.\ }
\newcommand\FF{\mathbb{F}}
\newcommand\eqdef{\overset{\text{def}}{=}}
\newcommand\CA{\mathcal{A}}
\newcommand\HH{\mathcal{H}}
\DeclareMathOperator\soc{M}
\newcommand\socG{\soc(G)}
\DeclareMathOperator\Core{Core}
\DeclareMathOperator\RCore{RC}
\newcommand\CG[1]{\Core_G(#1)}
\newcommand\RC[1]{\RCore_G(#1)}
\newcommand\MM{\mathcal{M}}
\newcommand\TT{\mathcal{T}}
\newcommand\MAGMA{\textsf{MAGMA}$\,\,$}
\begin{document}

\title{Minimal Permutation Representations of Nilpotent Groups}
\date{\today}

\author[Elias]{Ben Elias}
\address{Ben Elias, Columbia University Department of Mathematics,
  New York, NY 10027}
\email{belias@math.columbia.edu}

\author[Silberman]{Lior Silberman}
\address{Department of Mathematics, University of British Columbia,
Vancouver\ \ BC\ \ V6T 1Z4, Canada}
\email{lior@math.ubc.ca}

\author[Takloo-Bighash]{Ramin Takloo-Bighash}
\address{Ramin Takloo-Bighash, Department of Math, Stat, and Comp Sci,
University of Illinois at Chicago, Chicago, IL 60607}
\email{rtakloo@math.uic.edu}

\begin{abstract}
A minimal permutation representation of a finite group $G$ is a
faithful $G$-set with the smallest possible size. We study
the structure of such representations and show that for certain groups
they may be obtained by a greedy construction.  In these situations
(except when central involutions intervene) all minimal permutation
representations have the same set of orbit sizes. Using the same ideas
we also show that if the size $d(G)$ of a minimal faithful $G$-set is
at least $c|G|$ for some $c>0$ then $d(G) = |G|/m + O(1)$ for an integer $m$,
with the implied constant depending on $c$.
\end{abstract}

\maketitle

\section{Introduction}

It is a classical theorem of Cayley's that a group $G$ is isomorphic
to a subgroup of a symmetric group.  Accordingly we let the \emph{degree}
$d(G)$ of the finite group $G$ be the least integer $d$
such that $G$ can be embedded in $S_{d}$, the symmetric group on
$d$ letters. More precisely, Cayley's discussion in \cite{Cayley:PermRepn}
implicitly relies on the observation that the regular action of the
group on itself gives an embedding of $G$ into $S_{n}$, where $n=\card{G}$
is the order of $G$. It is then natural to ask to what extent the
resulting bound $d(G)\leq n$ is sharp.

The problem of finding $d(G)$ was first studied by Johnson
\cite{Johnson:MinPermRepn}. Among other things, he classified those
groups for which $d(G)=n$.  Except for a family of $2$-groups, these
groups are precisely the cyclic $p$-groups.  A structure theorem for
groups with $d(G)\geq c n$, $c$ any fixed positive constant, was
obtained by \cite{BabaiGoodmanPyber:ConstDelta} (see Remark
\ref{rem:BGP} below), while related results were obtained by
Berkovich in \cite{Berkovich:DegreeIndex}.

Although easy to define, the degree is difficult to compute. It is
more-or-less obvious that $d(G)$ can be computed by examining all
subsets of the subgroup lattice of $G$. The main conceptual finding of this note
is that in some cases a ``greedy'' algorithm is also available, that
is an algorithm that proceeds by making locally optimal choices rather
than directly searching for the global minimum.
This is hardly of practical application (the subgroup
lattice of a group may be exponentially larger than the group
itself), but it has surprising consequences for the structure of a
minimal permutation representation. We note that whenever a group
$G$ acts on a set $X$, the sizes of the orbits of the action
determine a partition of $\card{X}$. Our main application is:

\begin{thm}
\label{thm:mainthm}Let $G$ be a finite nilpotent group of odd order.
For each prime $p$, let $e_p$ be maximal such that the center of $G$
contains a subgroup isomorphic to the elementary abelian group $\FF_p^{e_p}$.
Let $X$ be a minimal faithful permutation representation of $G$. Then,
\begin{enumerate}
\item The number of orbits for the $G$-action on $X$ is $\sum_p e_p$;
\item The multiset of sizes of the orbits is a group isomorphism invariant.
\end{enumerate}
\end{thm}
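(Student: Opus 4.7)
The plan is to reduce to the Sylow case and then analyze minimal faithful representations of a $p$-group via their interaction with the socle, culminating in a matroid-style argument for the multiset invariance.

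\emph{Reduction to $p$-groups.} Since $G$ is nilpotent, $G = \prod_p G_p$ is the internal direct product of its Sylow subgroups, which have pairwise coprime orders. Every subgroup of $G$ then decomposes uniquely as $\prod_p H_p$ with $H_p \leq G_p$, and every transitive $G$-set $G/H$ factors as $\prod_p G_p/H_p$. If two distinct primes both act non-trivially on a single orbit, then $[G_p:H_p]\cdot[G_q:H_q] > [G_p:H_p] + [G_q:H_q]$, so splitting the orbit into two smaller ones (on which only $G_p$, respectively only $G_q$, acts non-trivially) strictly decreases the total size. Hence every minimal faithful $G$-set decomposes as $\bigsqcup_p X_p$ with $X_p$ a minimal faithful $G_p$-set on which the other $G_q$ act trivially. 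Both parts of the theorem thereby reduce to the case where $G$ is a $p$-group.

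\emph{Orbit count for a $p$-group.} Let $G$ be a $p$-group and write $V := \soc(G) \cong \FF_p^e$. Since every non-trivial normal subgroup of a $p$-group meets $V$ non-trivially, a $G$-set $X = \bigsqcup_i G/H_i$ is faithful if and only if $\bigcap_i (H_i \cap V) = 0$ inside $V$. Minimality forces this collection of subspaces to be irredundant. I would next show that in a minimal representation each $W_i := H_i \cap V$ must in fact be a hyperplane of $V$: if some $W_i$ had codimension $\geq 2$, one could --- using the greedy construction from the earlier theorem --- replace $H_i$ by a larger subgroup whose intersection with $V$ is a hyperplane, without increasing the total size and while preserving faithfulness, contradicting irredundant minimality. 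Irredundant hyperplanes of $V$ with trivial intersection are exactly $e$ in number (their normals in $V^{*}$ are linearly independent and span), yielding part (1).

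\emph{Invariance of orbit sizes via matroids.} For each non-zero $v \in V$ define the intrinsic invariant $\mu_G(v) = \min\{[G:H] : v \notin H\}$; it depends only on the line $\langle v \rangle$ and takes values in powers of $p$. In a minimal faithful representation each $[G:H_i]$ equals $\mu_G(L_i)$ for a line $L_i$ transverse to the hyperplane $W_i$, and the lines $L_1,\ldots,L_e$ form a basis of $V$ achieving the minimum of $\sum_i \mu_G(L_i)$ over all such bases. The orbit-size multiset is therefore the multiset of $\mu$-weights of a minimum-weight basis for the $\FF_p$-linear matroid of lines of $V$, weighted by $\mu_G$. Since every minimum-weight basis of a weighted matroid has the same multiset of weights (a classical result of Edmonds), this multiset depends only on the isomorphism type of $G$, proving (2). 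The main obstacle is the preceding step --- justifying that in every minimal faithful representation the stabilizers actually realize the $\mu_G(L_i)$-values on a basis, rather than some other configuration of comparable total size --- and this is where the greedy-algorithm structure from the earlier theorem is used, together with the odd-order hypothesis to exclude the central-involution configurations noted in the abstract.
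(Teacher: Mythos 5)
Your overall architecture matches the paper's (force the point stabilizers to have codimension one in the socle, then get uniqueness of the orbit-size multiset from the exchange/greedy property of a weighted matroid), and your preliminary reduction to $p$-groups is a legitimate variant the paper avoids by working with the whole socle at once. But two steps as written do not work.

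First, a smaller issue: to show each $W_i=H_i\cap V$ is a hyperplane you propose to replace a single $H_i$ of relative codimension $\ge 2$ by a single larger subgroup ``while preserving faithfulness.'' Enlarging one stabilizer enlarges its relative core, so $\bigcap_j W_j$ can become non-trivial; faithfulness is not preserved. The paper's replacement lemma (Lemma \ref{lem:replace}) replaces one subgroup by \emph{two} larger subgroups $H_1=HN_1$, $H_2=HN_2$ with $\RC{H_1}\cap\RC{H_2}=\RC{H}$, and the inequality $\frac{1}{|H_1|}+\frac{1}{|H_2|}\le\frac{1}{|H|}$ (strict in the odd-order case) is what contradicts minimality. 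This is where the odd-order hypothesis actually enters, not in the final matroid step.

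Second, and more seriously, your matroid for part (2) is the wrong one. You weight \emph{lines} of $V$ by $\mu_G(v)=\min\{[G:H]:v\notin H\}$ and claim a minimal faithful collection yields a minimum-weight basis of lines. Take $G=C_{p^2}\times C_p=\langle a\rangle\times\langle b\rangle$, so $V=\langle a^p,b\rangle\cong\FF_p^2$. Every subgroup of index $p$ contains $a^p$, so $\mu_G(a^p)=p^2$ while $\mu_G(b)=\mu_G(a^pb)=p$ (witnessed by $\langle a\rangle$). The basis $\{b,\,a^pb\}$ has total weight $2p$, yet $d(G)=p^2+p$: the two subgroups of index $p$ realizing those weights are both $\langle a\rangle$ (or share the relative core $\langle a^p\rangle$), so they do not form a faithful collection. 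Hence the minimal faithful collection is \emph{not} a minimum-weight basis of your line matroid, and the Edmonds-type invariance theorem you invoke says nothing about it. The correct ground set is the set $\CA$ of codimension-one subgroups themselves (equivalently, hyperplanes $W$ weighted by $\min\{[G:H]:\RC{H}=W\}$), with independence measured through the relative cores; that is exactly the matroid of Proposition \ref{pro:matroid}, for which the paper's Theorem \ref{thm:algo-works} carries out the greedy argument and yields the invariance of the multiset. As it stands, your part (2) has a genuine gap.
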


This is a special case of a more general result, Theorem \ref{thm:mainthm2} below.
We remark that a restriction of the odd-order type is necessary, the
simplest counterexample being the four-group $C_2 \times C_2$.  Its
regular representation is a minimal permutation representation, but it
also has minimal representations with two orbits of size $2$. Though not strictly necessary for the proofs
of Theorems \ref{thm:mainthm} and \ref{thm:mainthm2}, we include Theorem \ref{thm:algo-works}. This theorem, which gives a method to find all \emph{perfect} minimal faithful permutation representations (c.f. Definition \ref{defn:perfect}), forms the conceptual backbone of our work.

The main motivation of this work was to understand the
distribution of $\Delta(G) \eqdef d(G) / \card{G}$ in the interval $[0, 1]$.
For example, it was easy to show that every number of the form $\frac{1}{n}$,
$n$ a positive integer, is a limit point of $\Delta(G)$ as $\card{G}$ tends
to infinity. Clearly, zero is also a limit point. We show here (see Theorem
\ref{thm:limitpts} below) that these are the only limit points.

This paper is organized as follows. In Section 2 we recall basic definitions. Section 3 contains our main results. Section 4 contains our study of limit points of $\Delta(G)$ in the interval $[0,1]$, plus some numerical results.

\section{\label{sec:indiv}Definitions}

We review some notation dealing with standard constructions of group actions.
For further details and basic definitions see e.g. sections 1.1-1.4 of \cite{Cameron:PermGps}, or sections 1.3-1.4 of \cite{dixonmortimer}. For basic materials on the socle see section 4.3 of \cite{dixonmortimer}.

Let $G$ be a finite group acting on a set $X$. We call this action a \emph{minimal faithful permutation representation} if the action is faithful, and the size of the set $X$ is the smallest possible among all sets on which $G$ acts in a faithful fashion. Under the action of $G$, the set $X$ decomposes as a disjoint union of orbits. Choosing a point stabilizer subgroup in each orbit, it is clear that minimal faithful permutation representations correspond to collections\footnote{We shall use the term ``colllection'' for such sets of subgroups.} $\HH$ of subgroups of $G$ where:

\begin{enumerate}
\item The \emph{core} of $\HH$,
$$ \CG{\HH} \eqdef \bigcap_{H\in\HH} \CG{H} =
 \bigcap_{H\in\HH} \bigcap_{g\in G} H^g $$
is trivial, and
\item $\sum_{H\in\HH}[G:H]$ is minimal among all $\HH$ satisfying (1).
\end{enumerate}

We call such sets $\HH$ ``minimal faithful collections''; they are
the subject of this paper.  The first condition corresponds to faithfulness
of the action, the second to the minimality of the degree.  Clearly if $\HH$
is a minimal faithful collection, no two of its elements can be
conjugate.

Note that the core of a subgroup $H < G$ is precisely the largest normal subgroup of $G$ contained in $H$.

We shall make use of the \emph{socle}, $\socG$, of a finite
group $G$, the subgroup generated by the set $\MM(G)$ of all
minimal normal subgroups of $G$.  Specifically, the lattice
$\TT(G) = \left\{ T\ideal G \mid T\subset \socG \right\}$ of normal
subgroups of $G$ contained in the socle will play a major role.

Every element $T\in\TT$ can be written as a direct product of
minimal normal subgroups (\cite{Suzuki:I} Thm II.4.8 p.131).  Moreover, the number of factors in any such direct product is an invariant of the pair $(G,T)$.  We denote it $\dim_G T$ and call it the \emph{dimension} of $T$.   In the language of order theory,
the lattice $\TT$ is \emph{atomic} with the minimal normal subgroups
being the atoms.  Since the lattice of normal subgroups
of $G$ is modular, both $\TT$ and its dual are \emph{matroids}. For readers unfamiliar with this theory, one should heuristically think of $\TT$ as behaving in a similar fashion to the lattice of subspaces of a vector space.

When $G$ is nilpotent every normal subgroup intersects the center (\cite{Suzuki:II} Thm IV.2.9 p. 18).
The discussion above is then elementary.  Since every subgroup of
the center is normal, $\socG = \soc(Z(G))$.  Furthermore, the socle
is a product of elementary abelian $p$-groups.

For a subgroup $H$ of $G$ we write $\RC{H}$ for the \emph{relative core}
of $H$, the subgroup $\CG{H}\cap \socG$.  It is then clear that
$$ \RC{H} = \left< N\in\MM(G) \mid N\subset H \right>. $$
For a collection $\HH$ of subgroups we similarly set
$$ \RC{\HH} \eqdef \bigcap_{H\in\HH} \RC{H} = \CG{\HH}\cap \socG. $$
It is clear that $\CG{\HH}$ is trivial if and only if $\RC{\HH}$ is trivial.
This simple observation underlies our later analysis.

We also occasionally write $\HH_M$ for $\RC{\HH}$, and $H_M$ for $\RC{H}$.

We extend the notion of dimension above to all subgroups of $G$ by setting
$\dim_G(H) = \dim_G(\RC{H})$.  In particular we write $\dim G$ for
$\dim_G(G) = \dim_G(\socG)$.  We will also use the \emph{codimension}
$\codim_G(H) = \dim G - \dim_G(H)$.

\section{\label{sec:algo}Determining $d(G)$}

We discuss here the (algorithmic) problem of constructing a minimal
permutation representation of $G$.  As input, we give ourselves the
subgroup lattice of $G$ and, in addition, the order of each subgroup
and whether it is normal in $G$ or not.  This analysis will shed light
on the structure of the minimal permutation representations.

\subsection{A special class of groups}

\begin{defn}\label{defn:socle-friendly}
Let $G$ be an arbitrary finite group, and let $\TT$ be as above. We call $G$
\emph{socle friendly} if for all $H\subgp G$, $T\in\mathcal{T}$, we have $\RC{H\cdot T}=\RC{H}\cdot T$.
\end{defn}

\begin{lem}
\label{lem:enlarge-kernel} If $G$ is a nilpotent group, then $G$ is socle friendly.
\end{lem}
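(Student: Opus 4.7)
The plan is to exploit the observation already made in the paper: for nilpotent $G$, the socle $S \eqdef \socG$ is contained in $Z(G)$, so every subgroup of $S$ is automatically normal in $G$ and $\TT$ is just the subgroup lattice of $S$.

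The key identity, from which everything follows, is that $\RC{H} = H \cap S$ for every $H \subgp G$. The inclusion $\RC{H} \subseteq H \cap S$ is immediate from the definition of the relative core. Conversely, $H \cap S$ is a subgroup of the central abelian group $S$, hence normal in $G$, and it is contained in $H$, so it lies in $\CG{H} \cap S = \RC{H}$.

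Given this identity, the inclusion $\RC{H}\cdot T \subseteq \RC{H\cdot T}$ is automatic: both $\RC{H}$ and $T$ are normal subgroups of $G$ contained in $\socG$ and in $H\cdot T$, so their product has the same properties. For the reverse inclusion, apply the identity to the subgroup $HT$ to obtain $\RC{HT} = HT \cap S$, and then invoke the modular law (or argue directly): if $x = ht \in HT \cap S$ with $h\in H$, $t\in T \subseteq S$, then $h = xt^{-1} \in S$, so $h \in H \cap S$ and thus $x \in (H \cap S)\cdot T = \RC{H}\cdot T$, using $H \cap S = \RC{H}$ one more time.

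There is no real obstacle; the whole argument collapses once the socle is recognised to be central. The role of nilpotency is precisely to guarantee this centrality --- without it, $H \cap S$ need not be normal in $G$, the Dedekind-type rearrangement can fail, and socle-friendliness can genuinely break down. It is therefore worth stressing in the written proof that centrality of $S$ is used at both steps, and that this is the only property of nilpotent groups being invoked.
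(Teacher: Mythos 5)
Your proof is correct, and it takes a genuinely different and more economical route than the paper's. The paper first reduces to the case $H\cap T=\{1\}$ using relative complementation in $\TT$, and then argues atom by atom: for each minimal normal subgroup $N\leq HT$ it projects $N$ isomorphically onto a subgroup $N'\leq H$, uses centrality of $N$ and $T$ to show $N'$ is again a central minimal normal subgroup, and concludes $N\subseteq N'T\subseteq \RC{H}\cdot T$. You instead isolate the identity $\RC{H}=H\cap \socG$, valid for nilpotent $G$ precisely because the socle is central (so $H\cap\socG$ is normal in $G$ and contained in $H$, hence lies in $\CG{H}$), after which socle-friendliness collapses to the Dedekind-type computation $HT\cap S=(H\cap S)\cdot T$ for $T\leq S$, verified by a one-line element chase. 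Both arguments ultimately rest on the same fact --- centrality of the socle of a nilpotent group --- but yours packages it as a single clean identity and avoids the reduction step and the homomorphism $n\mapsto h_n$ entirely; the paper's version stays closer to the minimal-normal-subgroup/matroid language used throughout the rest of the text. One small presentational point: it is worth stating explicitly at the outset why $\socG\subseteq Z(G)$ for nilpotent $G$ (each minimal normal subgroup meets the center nontrivially, hence by minimality is contained in it), since the paper's Section 2 asserts $\socG=\soc(Z(G))$ but your argument needs the containment in the center as such.
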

\begin{proof}
Since the lattice $\TT$ is relatively complemented, we may
write $T=(T\cap \RC{H})\cdot S$ for some $T\in S$ disjoint to $\RC{H}$.
We then have $H\cdot T=H\cdot S$ and $\RC{H}\cdot T=\RC{H}\cdot S$ so we may assume
$H\cap T=\left\{ 1\right\}$. Clearly $\RC{H}\cdot T\subset\RC{H\cdot T}$.
Conversely, let
$N\subgp HT$ be a minimal normal subgroup of $G$. If $N\subgp T$
there is nothing to prove, so we may assume $T\cap N=\left\{
1\right\}$.  Since $H$ and $T$ are disjoint, every $n\in N$ can be
uniquely written in the form $n = h_n t_n$ for some $h_n \in H$ and
$t_n \in T$.  Note that the map $n\mapsto h_n$ is a group homomorphism
(it is the restriction to $N$ of the quotient map $H\cdot T/T\isom H$), and since
$N$ and $T$ are disjoint it is an isomorphism onto its image $N'$.

Since $N$ and $T$ are central subgroups (here we use the nilpotence of
$G$), it follows that $N'$ is a central subgroup
as well, and since $N$ was a cyclic group of prime order so is $N'$.
It follows that $N'$ is a minimal normal subgroup of $G$, contained in $H$.
We conclude that $N \subset N' T \subset \RC{H}\cdot T$.
\end{proof}

\begin{rem}\label{saunders}
Not every finite group is socle friendly. Here is the construction of an infinite family of examples simplifying the construction of \cite{saunders}. Let $H$ be any finite group with two non-isomorphic one dimensional representations $V_1, V_2$ over a finite field $\mathbb{F}$. We let $V = V_1 \oplus V_2$ and $G= H \rtimes V$. Then $\soc(G) = V$ and $\TT = \{ 0, V_1, V_2, V\}$. Let $W$ be
any one dimensional $\mathbb{F}$-subspace not containing either of $V_1, V_2$. Then $W$ is core-free and consequently $\RC{W} \cdot V_1 = V_1$ and $\RC{W} \cdot V_2 = V_2$. But $W\cdot V_1 = W \cdot V_2 = V$ and as a result $\RC{W \cdot V_1} = \RC{W \cdot V_2} = V$. This shows that $G$ is not socle friendly.
\end{rem}

\subsection{Minimal faithful collections and codimension one subgroups}

Let $G$ be a finite socle friendly group.  We are interested in
constructing a minimal faithful collection of subgroups of $G$, and
a natural way to do so is step-by-step, incrementally adding
subgroups to our collection until it is faithful. Rather than
keeping track of $\CG{\HH}$, we note that $\RC{\HH}$
carries sufficient information to decide whether $\CG{\HH}$ is
trivial. Moreover, while the cores $\CG{\HH}$ decrease through
the lattice of all normal subgroups of $G$, the relative cores
$\RC{\HH}$ decrease through the lattice $\mathcal{T}(G)$
which is much easier to work with.

We now turn to the ``minimality'' property of a collection, which
appears to push in the opposite direction to ``faithfulness''. The
first favors selecting large subgroups, and having few of them. The
second seems to suggest choosing small subgroups, or else many large
ones will be needed. The multiplicative property of orders of
subgroups actually implies that choosing many large subgroups is the right way.
The analysis is very similar to that of Johnson \cite{Johnson:MinPermRepn}.
In both cases it is shown that the elements of a minimal faithful
collection may be (and in some cases, must be) drawn from a
particular class of subgroups, using the same trick.  The reader should
compare the following Lemma with \cite[Lemma 1]{Johnson:MinPermRepn}

\begin{lem}
\label{lem:replace}(``replacement lemma'')
Let $H\subgp G$ be of codimension at least $2$.
Then there exist subgroups $H_1$ and $H_2$ of $G$ containing $H$
such that $\RC{H_1}\cap \RC{H_2} = \RC{H}$ and
$\frac{1}{\card{H_{1}}}+\frac{1}{\card{H_{2}}}\leq\frac{1}{\card{H}}$.
Moreover, this inequality is strict unless $G$ contains at least two
central involutions.
\end{lem}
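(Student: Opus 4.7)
The plan is to take $H_1 = H\cdot T_1$ and $H_2 = H\cdot T_2$ for two minimal normal subgroups $T_1, T_2 \in \MM(G)$ chosen to be independent over $\RC{H}$ in the matroid $\TT$, and then apply a sum-of-reciprocals index estimate in the spirit of Johnson \cite[Lemma~1]{Johnson:MinPermRepn}. The containment $H \subset H_i$ is automatic, and socle friendliness will handle the relative-core condition; the index inequality will follow directly from $T_i \not\leq H$.

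Concretely: the hypothesis $\codim_G H \geq 2$ means the contraction of $\TT$ above $\RC{H}$ is a matroid of rank at least $2$, so first I would pick any $T_1 \in \MM(G)$ with $T_1 \not\subseteq \RC{H}$, and then any $T_2 \in \MM(G)$ with $T_2 \not\subseteq \RC{H}\cdot T_1$. Modularity of the lattice of normal subgroups then forces $\dim(\RC{H}\cdot T_1\cdot T_2) = \dim\RC{H} + 2$, equivalently $\RC{H}\cdot T_1 \cap \RC{H}\cdot T_2 = \RC{H}$. Socle friendliness (Definition~\ref{defn:socle-friendly}) yields $\RC{H_i} = \RC{H\cdot T_i} = \RC{H}\cdot T_i$, so the required identity $\RC{H_1}\cap \RC{H_2} = \RC{H}$ is immediate. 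Next, $T_i \not\leq H$ (else $T_i \leq \CG{H}\cap \socG = \RC{H}$, contradicting the choice), so $[T_i:T_i\cap H] \geq 2$; since $|H_i| = |H|\cdot [T_i:T_i\cap H]$, this gives
\[
\frac{1}{|H_1|} + \frac{1}{|H_2|} = \frac{1}{|H|}\left(\frac{1}{[T_1:T_1\cap H]} + \frac{1}{[T_2:T_2\cap H]}\right) \leq \frac{1}{|H|}.
\]

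The delicate part is the strictness, which I expect to be the main obstacle. Equality demands $[T_i:T_i\cap H] = 2$ for both $i$, hence each $T_i$ is an elementary abelian $2$-group with $T_i\cap H$ of index $2$. The clean subcase is $|T_i|=2$: then each $T_i$ is generated by a central involution $z_i$, and the matroid-independence of $T_1, T_2$ guarantees $z_1 \neq z_2$, producing the two central involutions the statement concedes. The plan for the remaining subcase (some $T_i$ a higher-rank irreducible $\FF_2 G$-submodule of $\socG$ with $T_i\cap H$ a non-$G$-invariant hyperplane) is to argue that the freedom in selecting $T_i$ within its matroid slot lets one strictly improve the index ratio, unless the socle already contains two independent central involutions. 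For nilpotent $G$ this obstacle evaporates entirely, since minimal normal subgroups are cyclic of prime order: $[T_i:T_i\cap H]$ is then that prime, and whenever $G$ has fewer than two central involutions at least one of the two primes exceeds $2$, forcing $\frac{1}{[T_1:T_1\cap H]}+\frac{1}{[T_2:T_2\cap H]}\leq \frac{1}{2}+\frac{1}{3}<1$.
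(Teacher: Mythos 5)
Your construction is the paper's own: choose two atoms $N_1,N_2\in\MM(G)$ independent over $\RC{H}$, set $H_i=H\cdot N_i$, use socle friendliness to get $\RC{H_i}=\RC{H}\cdot N_i$, deduce $\RC{H_1}\cap\RC{H_2}=\RC{H}$ from the dimension count, and get the inequality from $[H_i:H]\geq 2$. Up to and including $\frac{1}{\card{H_1}}+\frac{1}{\card{H_2}}\leq\frac{1}{\card{H}}$ your argument coincides with the paper's, and your bookkeeping $[H_i:H]=[T_i:T_i\cap H]$ is if anything more careful than the paper's parenthetical assertion that the products $H\cdot N_i$ are semidirect (i.e.\ that $H\cap N_i=\{1\}$), which is automatic only when the minimal normal subgroups have prime order.

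The gap is exactly where you predicted it: the strictness clause. The paper disposes of it in one line: granting $H\cap N_i=\{1\}$, equality forces $\card{N_i}=2$, and a normal subgroup of order $2$ is central, so the two (distinct, by independence) atoms supply the two central involutions. Your finer analysis correctly shows that without $H\cap T_i=\{1\}$ equality only forces each $T_i$ to be an elementary abelian $2$-group meeting $H$ in an index-$2$ subgroup, and such a $T_i$ of rank $\geq 2$ need not be central and need not contain any central involution. Your handling of this residual subcase --- that ``the freedom in selecting $T_i$ within its matroid slot lets one strictly improve the index ratio'' --- is a stated intention rather than an argument; as written, nothing rules out a socle-friendly group all of whose usable atoms are noncentral minimal normal $2$-subgroups of rank $\geq 2$ meeting $H$ in hyperplanes, in which case your construction yields equality without yielding central involutions. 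So the proposal does not establish the ``Moreover'' clause for arbitrary socle-friendly $G$. Your closing observation does close the gap whenever minimal normal subgroups are cyclic of prime order, in particular for nilpotent $G$ --- which is the only class for which the paper verifies socle friendliness (Lemma \ref{lem:enlarge-kernel}) and the case needed for Theorem \ref{thm:mainthm} --- and there your argument agrees with the paper's.
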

\begin{proof}
Since $\TT$ is a matroid and $\RC{H}$ has codimension at least $2$,
there exists two minimal normal subgroups $N_{1}, N_{2} \in \MM(G)$
(``atoms of the lattice $\TT(G)$'') such that
the lattice join $\RC{H}N_{1}N_{2}$ has dimension greater by $2$ than that
of $\RC{H}$.  In other words, that lattice join is a direct product.
The inclusions $\RC{H}\subgp \RC{H}N_{i}$ are then proper, and we have
$\RC{H} = \RC{H}N_{1} \cap \RC{H}N_{2}$.

We thus set $H_{i}=H\cdot N_{i}$, $i=1,2$
(these are semi-direct products as the $N_{i}$ are minimal normal subgroups).
By Lemma \ref{lem:enlarge-kernel}, $\RC{H_{i}} = \RC{H}N_{i}$, and
it follows that $\RC{H_{1}} \cap \RC{H_{2}} = \RC{H}$.
Since $H$ is a proper subgroup of both
$H_{1},H_{2}$ its index in both subgroups is at least $2$, and we have
$$
\frac{1}{\card{H_{1}}}+\frac{1}{\card{H_{2}}}\leq\left(\frac{1}{2}+\frac{1}{2}\right)\frac{1}{\card{H}}=\frac{1}{\card{H}}.
$$

Equality can only happen if both $N_{1}$ and $N_{2}$ are of order
$2$, in which case the non-trivial elements of $N_{i}$ are both
central involutions.
\end{proof}
\begin{defn}
Let $\CA=\CA(G)$ denote the set of subgroups of $G$
of codimension $1$.
\end{defn}

The reader should compare the next theorem with
\cite[Cor.\, 1]{Johnson:MinPermRepn}.

\begin{thm}\label{thm:codim1-suffice}
There exist minimal faithful collections contained in $\CA$,
and these are the ones of maximal size.  If $G$ has at most one
central involution then every minimal faithful collection is
contained in $\CA$.
\end{thm}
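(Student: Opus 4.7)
The plan is to iterate the replacement lemma (Lemma \ref{lem:replace}). Starting from any minimal faithful collection $\HH$, if some $H \in \HH$ has $\codim_G H \geq 2$, the lemma supplies $H \subsetneq H_1, H_2$ with $\RC{H_1} \cap \RC{H_2} = \RC{H}$ and $|H_1|^{-1} + |H_2|^{-1} \leq |H|^{-1}$. Replacing $H$ by the pair $\{H_1, H_2\}$ gives $\HH'$ with $\RC{\HH'} = \RC{\HH}$, so faithfulness is preserved, and with sum of indices no larger than that of $\HH$, so $\HH'$ is still minimal faithful.

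Next I would verify that $|\HH'| = |\HH| + 1$, that is, that no two elements of $\HH'$ are $G$-conjugate. The subgroups $H_1, H_2$ are not conjugate: a conjugator would send the normal subgroup $N_1$ into $H_2$, forcing $\RC{H_2} \supseteq \RC{H}\cdot N_1 \cdot N_2$ and contradicting $\dim_G \RC{H_2} = \dim_G \RC{H} + 1$. Likewise no $H_i$ is conjugate to any $K \in \HH \setminus \{H\}$: dropping such a duplicate from $\HH'$ would yield a faithful collection whose sum of indices is strictly less than that of $\HH$ (since $H \subsetneq H_{3-i}$ gives $[G:H_{3-i}] < [G:H]$), contradicting minimality.

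Iterating produces a chain of minimal faithful collections of strictly increasing size, which must terminate since the number of subgroups of $G$ is finite. At termination every element has codimension $\leq 1$; codimension-$0$ elements are trivially redundant (their $\RC{H} = \socG$) and could be dropped at a strict sum decrease, hence cannot appear. So the terminal collection lies in $\CA$, proving existence, and any minimal faithful collection of maximum size must itself lie in $\CA$, for otherwise one more replacement would enlarge it. For the central-involution clause, the equality case of Lemma \ref{lem:replace} requires both $N_1, N_2$ to have order $2$ and so produces two distinct central involutions; when $G$ has at most one such, every replacement strictly reduces the sum, so no minimal faithful collection can contain an element of codimension $\geq 2$.

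The remaining direction of the characterization---that every minimal faithful $\HH \subset \CA$ already has maximum size---is the main obstacle. Minimality forces $\{\RC{H} : H \in \HH\}$ to be an irredundant family of codimension-$1$ elements of $\TT$ with trivial meet, for otherwise some $H$ would be removable at a strict sum decrease. Invoking the matroid structure of $\TT$---which in the nilpotent case is literally the subspace lattice of $\socG \isom \bigoplus_p \FF_p^{e_p}$---every such irredundant cover has cardinality exactly $\dim G$: at each prime $p$ one needs precisely $e_p$ hyperplanes in $\soc(Z(G))$ restricted to the $p$-part for their meet to be trivial, totalling $\sum_p e_p = \dim G$. Together with the existence half this identifies $|\HH|$ with the common maximum size. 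This matroid/linear-algebra counting is the real subtlety; the replacement iteration drives everything else.
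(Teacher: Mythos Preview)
Your argument is correct and follows the same replacement--iteration strategy as the paper: both start from an arbitrary minimal faithful collection and repeatedly apply Lemma~\ref{lem:replace} to trade an element of codimension $\geq 2$ for a pair of strictly larger subgroups, preserving faithfulness and (weakly) the index sum while increasing the cardinality. You are in fact more careful than the paper in two places.

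First, you explicitly verify that the replacement step enlarges the collection by one, ruling out collisions between $H_1,H_2$ and the remaining members of $\HH$; the paper simply asserts that ``$\HH'$ has more elements than $\HH$'' without comment. (A minor remark: since collections are sets of subgroups, it would suffice to rule out equality rather than conjugacy, but your conjugacy argument is correct and also confirms that $\HH'$ satisfies the no-conjugate-pairs property of minimal faithful collections.)

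Second, you supply the converse half of the characterization---that every minimal faithful $\HH\subset\CA$ already attains the maximum size---via the matroid structure of $\TT$: an irredundant family of codimension-one elements with trivial meet is a basis of the dual matroid, hence has exactly $\dim G$ members. The paper's proof of this theorem establishes only the forward direction (maximal size $\Rightarrow$ contained in $\CA$) and existence, tacitly deferring the converse to Proposition~\ref{pro:matroid} and its corollaries. Your anticipation of that argument here is legitimate, since the matroid property of $\TT$ is set up in Section~\ref{sec:indiv} and does not depend on the present theorem.
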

\begin{proof}
Let $\HH$ be a faithful collection, and let $H\in\HH$.
If $H$ is of codimension $0$ (\ie $\RC{H}=\socG$) we have
$$
\left\{ 1\right\} = \RC{\HH} =
\RCore\left(\HH\setminus\left\{ H\right\} \right) \cap \RC{H} =
\RCore \left(\HH\setminus\left\{ H\right\} \right).
$$
In particular, $\mathcal{H}\setminus\left\{ H\right\} $ is also
faithful. If $H$ has codimension at least $2$, let $H_{1},H_{2}$
be the subgroups constructed in Lemma \ref{lem:replace},
and let
$\HH'=\left(\HH\setminus\left\{ H\right\}
\right)\cup\left\{ H_{1},H_{2}\right\} $. By construction we have
$\RC{\HH'} = \RC{\HH} = \left\{ 1\right\} $ so that
$\mathcal{H}'$ is faithful. In addition, Lemma \ref{lem:replace}
yields $\Delta(\HH')\leq\Delta(\HH)$, with a strict inequality
if $G$ has at most one central involution.  In
general we note that $\HH'$ has more elements than
$\HH$. In particular, a minimal faithful collection of
maximal size must consist of codimension-one subgroups.
\end{proof}
\begin{defn}
Call a collection $\HH\subset\CA$
\emph{independent} if its relative core is strictly contained in
that of any proper sub-collection; or, in other words, if $\{\RC{H} \mid H\in\HH\}$
is an independent set of atoms in the lattice dual to $\TT$.
\end{defn}
A minimal faithful collection $\mathcal{H}\subset\CA$ is
certainly independent -- otherwise it would have a faithful proper
sub-collection.

\begin{prop}
\label{pro:matroid}The set of independent collections of $\CA$
forms a matroid, i.e. the following statements are true:
\begin{enumerate}
\item A subcollection of an independent collection is independent.
\item $\mathcal{H}\subset A$ is independent if and only if $\codim_{G}\mathcal{H}_{M}=\card{\mathcal{H}}$.
\item \label{enu:mat-replace}If $\mathcal{H}$, $\mathcal{H}'$ are independent
collections with $\card{\mathcal{H}'}>\card{\mathcal{H}}$ then there
exists $H'\in\mathcal{H}'$ such that $\mathcal{H}\cup\left\{ H'
\right\} $ is independent.
\end{enumerate}
\end{prop}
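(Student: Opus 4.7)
The plan is to exploit the geometric (matroid) structure of the lattice $\TT$: since $\TT$ is modular and atomistic, and each $H\in\CA$ has $\RC{H}$ a coatom of $\TT$ (equivalently an atom of the dual lattice), all three assertions will follow from the modular dimension formula $\dim A+\dim B=\dim(A\cap B)+\dim(A\vee B)$. A consequence I will use repeatedly is the subadditivity of codimension, $\codim(A\cap B)\le \codim A+\codim B$, which yields in particular $\codim \HH_M \le |\HH|$ for any $\HH\subset\CA$.

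For (1), I would argue directly, without invoking (2), so that the later induction is not circular. Given an independent $\HH$ and nested $\HH_0\subsetneq\HH'\subset\HH$, form $\tilde\HH=\HH_0\cup(\HH\setminus\HH')$, a proper sub-collection of $\HH$. By independence of $\HH$, $\RC{\tilde\HH}\supsetneq\RC{\HH}$. Using
$$
\RC{\HH}=\RC{\HH'}\cap\RC{\HH\setminus\HH'},\qquad
\RC{\tilde\HH}=\RC{\HH_0}\cap\RC{\HH\setminus\HH'},
$$
the supposition $\RC{\HH_0}=\RC{\HH'}$ would force $\RC{\tilde\HH}=\RC{\HH}$, a contradiction; since $\RC{\HH'}\subset\RC{\HH_0}$ is automatic, $\RC{\HH'}\subsetneq\RC{\HH_0}$.

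For (2), I would induct on $|\HH|$. The cases $|\HH|\le 1$ are immediate from the coatom property. For the step, write $\HH=\HH_0\cup\{H\}$. If $\HH$ is independent, then by (1) so is $\HH_0$, and the inductive hypothesis gives $\codim(\HH_0)_M=|\HH_0|$. Independence of $\HH$ forces $\HH_M\subsetneq(\HH_0)_M$, i.e.\ $(\HH_0)_M\not\subset\RC{H}$; since $\RC{H}$ is a coatom, the join $(\HH_0)_M\vee\RC{H}$ must be $\socG$, and plugging into the modular dimension formula yields $\codim\HH_M=\codim(\HH_0)_M+1=|\HH|$. Conversely, if $\codim\HH_M=|\HH|$, then for any proper $\HH_0\subsetneq\HH$ subadditivity gives $\codim(\HH_0)_M\le|\HH_0|<|\HH|=\codim\HH_M$, so $\dim(\HH_0)_M>\dim\HH_M$ and hence $\RC{\HH_0}\supsetneq\RC{\HH}$.

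For (3), I would argue by contradiction: suppose $\HH\cup\{H'\}$ is non-independent for every $H'\in\HH'$. Since $H'\in\HH\cap\HH'$ would give $\HH\cup\{H'\}=\HH$ (already independent), we must have $\HH\cap\HH'=\emptyset$. For each $H'\in\HH'$, non-independence together with (2) gives $\codim(\HH_M\cap\RC{H'})\le|\HH|=\codim\HH_M$; combined with $\HH_M\cap\RC{H'}\subset\HH_M$, this forces equality and therefore $\HH_M\subset\RC{H'}$. Intersecting over $H'\in\HH'$ yields $\HH_M\subset\HH'_M$, so $|\HH|=\codim\HH_M\ge\codim\HH'_M=|\HH'|$, contradicting $|\HH'|>|\HH|$. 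The main obstacle is really in (2): I must carefully distinguish the two cases of the join $(\HH_0)_M\vee\RC{H}$ (namely $\RC{H}$ vs.\ $\socG$), and verify that the use of (1) in the inductive step is legitimate; the rest is routine modular-lattice arithmetic combined with the standard matroid exchange argument.
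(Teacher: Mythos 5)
Your proposal is correct and follows essentially the same route as the paper: part (1) is the identical complementation-and-intersection argument, part (2) is the same induction using the modular dimension formula and the dichotomy $S\vee T\in\{T,\socG\}$ for a coatom $T$, and part (3) is the paper's exchange argument merely recast as a proof by contradiction (``every $\RC{H'}$ contains $\HH_M$ would force $\HH'_M\supset\HH_M$''). The only cosmetic difference is that the paper states the equality criterion in (2) via the strictly decreasing chain of partial intersections, whereas you split the induction into two directions; the content is the same.
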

\begin{proof}
This will follow via the replacement Lemma from the general fact that $\TT$
is a matroid.
\begin{enumerate}
\item Let $\mathcal{H}\subset\CA$ be independent, and suppose  $\mathcal{H}''$ is a proper subcollection
of $\mathcal{H}'\subset\mathcal{H}$ such that
$\mathcal{H}''_{M}=\mathcal{H}'_{M}$. Letting
$\bar{\mathcal{H}}=\mathcal{H}\setminus\mathcal{H}'$, we have \[
\left(\bar{\mathcal{H}}\cup\mathcal{H}''\right)_{M}=\bar{\mathcal{H}}_{M}\cap\mathcal{H}''_{M}=\bar{\mathcal{H}}_{M}\cap\mathcal{H}'_{M}=\mathcal{H}_{M},\]
contradicting the independence of $\mathcal{H}$.
\item Let $S,T\in\mathcal{T}(G)$ with $\codim_{G}T=1$. Then $ST$ either
equals $T$ or $M$, and we have $\dim_{G}S\cap T=\dim_{G}S$ or
$\dim_{G}S-1$, respectively, by the inclusion-exclusion formula for dimension. By induction on
the size of any collection
$\mathcal{H} = \left\{H^i\right\}_{i=1}^{k}\subset\CA$ we see
that $\codim_{G}\mathcal{H}_{M}\leq\card{\mathcal{H}}$, with
equality if and only if the sequence of intersections $\cap_{i=1}^{m} \RC{H^i}$
is strictly decreasing with $m$, $1\leq m \leq k$.
\item We have $\dim_{G}\mathcal{H}'_{M}<\dim_{G}\mathcal{H}_{M}$, and hence
$\mathcal{H}'_{M}$ does not contain $\mathcal{H}_{M}$. It follows
that we can find $H'\in\mathcal{H}'$ such that $H'_{M}$ does not
contain $\mathcal{H}_{M}$. Then
$\dim_{G}(\mathcal{H}_{M}\cap H'_{M})=\dim_{G}\mathcal{H}_{M}-1$ (equality
is not possible by the choice of $H'$). By part (2) we see that
that $\mathcal{H}\cup\left\{ H'\right\} $ is independent.
\end{enumerate}
\end{proof}
\begin{cor}
Let $\mathcal{H}\subset A$ be independent. Then the following are
equivalent:
\begin{enumerate}
\item $\card{\mathcal{H}}=\dim G$;
\item $\mathcal{H}$ is faithful;
\item $\mathcal{H}$ is a maximal independent subset of
$\CA$. Here, \emph{maximal} means maximal with respect to
inclusion.
\end{enumerate}
\end{cor}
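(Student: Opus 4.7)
The plan is to extract both equivalences directly from Proposition \ref{pro:matroid} together with the earlier observation that $\CG{\HH}$ is trivial iff $\RC{\HH} = \HH_M$ is trivial. The pivotal identity is part (2) of that proposition: for any independent $\HH\subset\CA$ one has $|\HH| = \codim_G \HH_M$.

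First I would dispense with (1) $\Leftrightarrow$ (2). Given that $\HH$ is independent, $|\HH| = \codim_G \HH_M = \dim G - \dim_G \HH_M$. Thus $|\HH| = \dim G$ iff $\dim_G \HH_M = 0$ iff $\HH_M = \{1\}$, and this last condition is precisely faithfulness of $\HH$ by the observation at the end of Section \ref{sec:indiv}.

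Next I would show (1) $\Rightarrow$ (3). Suppose $|\HH| = \dim G$ and that $\HH \cup \{H'\}$ is independent for some $H' \in \CA \setminus \HH$. Applying Proposition \ref{pro:matroid}(2) to this enlarged collection gives $\codim_G(\HH \cup \{H'\})_M = |\HH| + 1 = \dim G + 1$, which is absurd. Hence $\HH$ admits no proper independent extension in $\CA$.

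The last step, (3) $\Rightarrow$ (1), is where the matroid exchange property of Proposition \ref{pro:matroid}(3) earns its keep. By Theorem \ref{thm:codim1-suffice} there exists a minimal faithful collection $\HH^\ast \subset \CA$, and the remark preceding Proposition \ref{pro:matroid} notes that such a collection is automatically independent; by the already-established (2) $\Rightarrow$ (1) we get $|\HH^\ast| = \dim G$. Now assume for contradiction that our maximal independent $\HH$ satisfies $|\HH| < \dim G = |\HH^\ast|$. Proposition \ref{pro:matroid}(3) then supplies some $H' \in \HH^\ast$ for which $\HH \cup \{H'\}$ is independent, contradicting maximality of $\HH$. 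Thus $|\HH| \geq \dim G$, and the reverse inequality is immediate from Proposition \ref{pro:matroid}(2) since $\codim_G \HH_M \leq \dim G$ always.

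There is no real obstacle here; everything is a bookkeeping exercise once Proposition \ref{pro:matroid} is in hand. The only point worth being careful about is remembering to cite Theorem \ref{thm:codim1-suffice} to guarantee the existence of a faithful (hence independent, of size $\dim G$) subcollection of $\CA$ — without such an existence result, the implication (3) $\Rightarrow$ (1) would have no leverage to apply the exchange axiom.
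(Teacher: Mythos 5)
Your proof is correct, and the equivalence of (1) and (2) is handled exactly as in the paper, via Proposition \ref{pro:matroid}(2). Where you diverge is in how you tie (3) to the other two conditions. The paper argues directly on faithfulness: a faithful independent collection is trivially maximal (nothing can strictly shrink a trivial relative core), and a non-faithful one is never maximal because one can exhibit an explicit extension --- namely a codimension-one element $T\in\TT$ not containing $\HH_M$, which is itself a normal subgroup lying in $\CA$ with $\RC{T}=T$. You instead prove $(1)\Rightarrow(3)$ by the counting bound $\codim_G(\HH\cup\{H'\})_M\leq\dim G$, and $(3)\Rightarrow(1)$ by invoking Theorem \ref{thm:codim1-suffice} to produce a basis of size $\dim G$ and then applying the exchange axiom of Proposition \ref{pro:matroid}(3). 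This is the standard matroid argument that all bases have equal cardinality; it is valid here (and there is no circularity, since Theorem \ref{thm:codim1-suffice} rests only on the replacement lemma), but it imports more machinery than the paper needs: the paper's direct construction avoids both the existence theorem and the exchange property. One small point you should make explicit: when you apply the exchange axiom, the element $H'\in\HH^\ast$ it produces cannot already lie in $\HH$, because every $H\in\HH$ satisfies $\RC{H}\supseteq\HH_M$ while the proof of Proposition \ref{pro:matroid}(3) selects $H'$ with $\RC{H'}$ not containing $\HH_M$; this is what guarantees that $\HH\cup\{H'\}$ is a \emph{proper} extension contradicting maximality.
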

\begin{proof}
The equivalence of (1) and (2) is contained in part (2) of
Proposition \ref{pro:matroid}. An independent collection with
$\mathcal{H}_{M}=\left\{ 1\right\} $ is certainly maximal. An
independent collection with $\mathcal{H}_{M}\neq\left\{ 1\right\} $
is not maximal since in that case there exists some
$T\in\mathcal{T}$ of codimension $1$ which does not contain
$\mathcal{H}_{M}$, and we can add it to $\mathcal{H}$ to form a
larger independent collection.
\end{proof}
\begin{cor}
A subset $\mathcal{H}\subset\CA$ is a minimal faithful
collection if and only if it is independent and maximizes \[
w(\mathcal{H})=\sum_{H\in\mathcal{H}}\left(2-\frac{1}{\card{H}}\right)\]
among the independent subsets.
\end{cor}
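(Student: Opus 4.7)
The plan is to decouple the weight function $w$ into two parts, one that forces any maximizer to be a \emph{maximal} independent set (equivalently, a faithful collection of size $\dim G$), and one that, on the set of such maximal collections, coincides up to an additive constant with minus the degree. Concretely, rewrite
$$ w(\mathcal{H}) \;=\; 2\card{\mathcal{H}} \;-\; \sum_{H\in\mathcal{H}}\frac{1}{\card{H}}, $$
so that each individual summand $2 - 1/\card{H}$ is at least $1$ and in particular strictly positive.

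I would then deduce from this positivity, together with the matroid exchange property (Proposition \ref{pro:matroid}(\ref{enu:mat-replace})), that any maximizer of $w$ must be a maximal independent subset of $\CA$. Indeed, if $\mathcal{H}$ is independent but not maximal, then by the preceding corollary it is not faithful, so $\card{\mathcal{H}} < \dim G$; extending $\mathcal{H}$ by one element (possible since maximal independent sets have size $\dim G$) strictly increases $w$. Thus every $w$-maximizer $\mathcal{H}$ satisfies $\card{\mathcal{H}} = \dim G$, and hence by the previous corollary is a faithful collection.

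On the set of maximal independent collections in $\CA$, the first term $2\card{\mathcal{H}} = 2\dim G$ is a fixed constant, so maximizing $w(\mathcal{H})$ is equivalent to minimizing $\sum_{H\in\mathcal{H}}1/\card{H}$. Multiplying by $\card{G}$, this is exactly the degree $\sum_{H\in\mathcal{H}}[G:H]$ of the associated permutation representation. Hence, among maximal independent subsets of $\CA$, the maximizers of $w$ are precisely the minimum-degree faithful collections contained in $\CA$. By Theorem \ref{thm:codim1-suffice} the absolute minimum degree is already achieved by some collection in $\CA$, so these collections are exactly the minimal faithful collections that happen to lie in $\CA$.

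Combining the two paragraphs gives both directions: a minimal faithful collection $\mathcal{H}\subset\CA$ is necessarily independent (as noted just before Proposition \ref{pro:matroid}) and has maximum $w$ by the degree comparison; conversely, any independent $w$-maximizer is forced to be maximal independent and then to minimize the degree. The only delicate point, and really the only thing one has to check carefully, is the positivity of the increment $2-1/\card{H}$ together with the augmentation step from Proposition \ref{pro:matroid}(\ref{enu:mat-replace}); everything else is a direct bookkeeping argument.
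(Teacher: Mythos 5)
Your proof is correct and follows essentially the same route as the paper's: positivity of each summand $2-\frac{1}{\card{H}}$ forces any $w$-maximizer to be a maximal independent set, and on maximal independent sets $w(\mathcal{H})=2\dim G-\sum_{H\in\mathcal{H}}\frac{1}{\card{H}}$, so maximizing $w$ amounts to minimizing the degree; your explicit appeal to Theorem \ref{thm:codim1-suffice} to identify the minimum over $\CA$ with the global minimum is a point the paper leaves implicit, but it is the same argument.
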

\begin{proof}
We have already noted that a minimal faithful collection contained
in $\CA$ is independent and maximal (with respect to
inclusion), and that a maximal (with respect to inclusion)
independent set is a faithful collection. It is clear that a subset
maximizing this weight function is maximal independent, since
$2-\frac{1}{\card{H}}>0$ for all subgroups $H$. Finally, we note
that a maximal independent set H satisfies
\[ w(\mathcal{H})=2\dim
G-\Delta(\mathcal{H}).\]

\end{proof}
\begin{cor}
\label{cor:card-faith}There exist minimal faithful collections of
size $\dim G$. If $G$ has more than one central involution,
there may also exist minimal faithful collections of smaller
size.
\end{cor}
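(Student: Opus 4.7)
My plan is to derive both assertions from material already established.

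For the first assertion, Theorem \ref{thm:codim1-suffice} produces a minimal faithful collection $\mathcal{H}$ contained in $\CA$. Such an $\mathcal{H}$ must be independent, since otherwise it would contain a proper faithful sub-collection, contradicting minimality. The corollary above equating $\card{\mathcal{H}} = \dim G$ with faithfulness for independent $\mathcal{H} \subset \CA$ then forces $\card{\mathcal{H}} = \dim G$.

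For the second assertion it suffices to exhibit an example. I would take $G = C_2 \times C_2$, which has two central involutions and satisfies $\socG = G$, so that $\dim G = 2$. The trivial subgroup $\{1\}$ has trivial core, so $\mathcal{H}_0 = \{\{1\}\}$ is a faithful collection of degree $\card{G} = 4$. On the other hand, the two proper subgroups of order $2$, call them $H_1$ and $H_2$, each have codimension $1$, their relative cores intersect trivially, and together they form a faithful independent collection $\mathcal{H}_1 = \{H_1, H_2\}$, again of degree $2+2=4$. Neither can be improved: every non-trivial proper subgroup of $G$ is normal and hence not core-free, so no single-subgroup faithful collection of degree less than $4$ exists, and any faithful collection of size at least two already contributes degree at least $2+2=4$. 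Consequently $d(G) = 4$, both $\mathcal{H}_0$ and $\mathcal{H}_1$ are minimal faithful, and the former has size $1 < 2 = \dim G$.

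The main obstacle is essentially rhetorical: once the Klein four-group example is in hand every verification is immediate, and the first assertion is a direct chaining of Theorem \ref{thm:codim1-suffice} with the preceding equivalence corollary. No new ideas beyond the replacement lemma and the matroid structure of $\CA$ are required.
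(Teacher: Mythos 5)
Your proposal is correct and follows the paper's own route for the first assertion: Theorem \ref{thm:codim1-suffice} supplies a minimal faithful collection inside $\CA$, minimality forces independence, and the equivalence of faithfulness with $\card{\HH}=\dim G$ for independent subsets of $\CA$ gives the count. For the second assertion the paper offers no proof beyond the example already cited in its introduction, and your verification that $C_2\times C_2$ has $d(G)=4$ with the singleton collection $\{\{1\}\}$ of size $1<2=\dim G$ is exactly that example, carried out correctly.
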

\begin{proof}
We have seen that there exist minimal faithful collections contained in
$\CA$, that these are independent sets, and that every independent
set has $\dim G$ elements.
\end{proof}

Inspired by this Corollary we make the following definition:
\begin{defn}\label{defn:perfect}
  A minimal faithful collection of size $\dim G$ is called \emph{perfect}. Correspondingly, a minimal
  faithful permutation representation with $\dim G$ orbits under the $G$-action is called \emph{perfect}.
\end{defn}

\begin{example}
Let $G$ be a $p$-group for a prime $p$, and let $Z=Z(G)$ be its
center. It is well-known (and follows from the class formula) that
every normal subgroup of $G$ intersects the center non-trivially.
Since every subgroup of the center is normal, it follows that
$\mathcal{M}(G)=\mathcal{M}(Z)$, and in particular $\dim G=\dim
Z(G)$. This observation recovers \cite[Thm. 3]{Johnson:MinPermRepn}:
\end{example}
\begin{thm}
Let $G$ be a $p$-group with center $Z$. Then there exists a minimal
faithful collection for $G$ of size $\dim Z$. If $p$ is odd
this holds for all minimal faithful collections.
\end{thm}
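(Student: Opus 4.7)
The theorem is essentially a direct consequence of the machinery already assembled: the example immediately preceding the statement gives $\dim G = \dim Z$ for any $p$-group, so both assertions reduce to statements about the size of minimal faithful collections. The plan is to dispatch existence using Corollary \ref{cor:card-faith}, then handle the odd-$p$ statement by ruling out central involutions and invoking Theorem \ref{thm:codim1-suffice}.

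First, since $G$ is a $p$-group it is nilpotent, hence socle friendly by Lemma \ref{lem:enlarge-kernel}, so all the matroid-theoretic machinery of Section \ref{sec:algo} applies. The example preceding the theorem shows $\MM(G) = \MM(Z)$, whence $\dim G = \dim Z$. Corollary \ref{cor:card-faith} then immediately yields a minimal faithful collection of size $\dim G = \dim Z$, which settles the first claim.

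For the second claim, suppose $p$ is odd. A central involution would be a central element of order $2$, impossible in a $p$-group with $p$ odd. Hence $G$ contains no central involutions, and the hypothesis ``at most one central involution'' of Theorem \ref{thm:codim1-suffice} is satisfied (vacuously). Consequently every minimal faithful collection $\HH$ is contained in $\CA$. Since a minimal faithful collection in $\CA$ cannot admit a faithful proper sub-collection, it is independent (as noted just before Proposition \ref{pro:matroid}). The corollary following Proposition \ref{pro:matroid} then forces $\card{\HH} = \dim G = \dim Z$, finishing the proof.

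There is essentially no obstacle here: all the real work has been done in proving that nilpotent groups are socle friendly, that the replacement lemma enlarges collections toward $\CA$, and that independent subcollections of $\CA$ form a matroid of rank $\dim G$. The only thing to check is the trivial observation that $p$ odd excludes central involutions, which is what allows the uniqueness part of Theorem \ref{thm:codim1-suffice} to apply.
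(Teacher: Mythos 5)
Your proof is correct and follows exactly the route the paper intends: the Example preceding the statement gives $\dim G=\dim Z$, Corollary \ref{cor:card-faith} supplies a minimal faithful collection of that size, and for odd $p$ the absence of central involutions lets Theorem \ref{thm:codim1-suffice} together with the corollary to Proposition \ref{pro:matroid} force every minimal faithful collection to have size $\dim G$. No issues.
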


\subsection{\label{sub:algo}Construction}

In the remainder of this section we assume that $G$ is a socle friendly finite group.  We have reduced the problem of finding a minimal faithful collection to maximizing an additive weight function on a matroid. This is a problem which is solvable by a greedy algorithm, and thus we may search for and construct perfect minimal faithful permutation representations. Before we present our method we record a useful Lemma:

\begin{lem}\label{lem:simplifying}
Let $\mathcal{H}\subset\CA$ be independent, and suppose
$H'\subgp G$ has the largest size possible such that $H'_{M}$
does not contain $\mathcal{H}_{M}$. Then $H'\in\CA$,
$\mathcal{H}\cup\left\{ H'\right\} $ is independent, and $H'$
maximizes the function $w(H)=2-\frac{1}{\card{H}}$ among all
$H\in\CA$ such that $\mathcal{H}\cup\left\{ H\right\} $ is
independent.
\end{lem}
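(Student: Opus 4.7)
The proof plan breaks into three parts, one for each claim, with everything hinging on the replacement lemma (Lemma \ref{lem:replace}) together with the matroid characterization in Proposition \ref{pro:matroid}(2). Throughout, the hypothesis is that $\mathcal{H}_M \neq \{1\}$ (otherwise $\mathcal{H}$ is already faithful and there is no $H'$ satisfying the condition), so the existence of $H'$ is not an issue.

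First I would show $H' \in \CA$. Since $\RC{G} = \socG$ contains $\mathcal{H}_M$, we have $H' \neq G$, so $\codim_G H' \geq 1$. Suppose for contradiction that $\codim_G H' \geq 2$. Apply Lemma \ref{lem:replace} to $H'$ to obtain subgroups $H_1, H_2$ properly containing $H'$ with $\RC{H_1} \cap \RC{H_2} = \RC{H'}$. Because $\mathcal{H}_M \not\subset \RC{H'} = \RC{H_1} \cap \RC{H_2}$, at least one $\RC{H_i}$ fails to contain $\mathcal{H}_M$. That $H_i$ is strictly larger than $H'$ and still satisfies the defining condition, contradicting the maximality of $|H'|$. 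Hence $\codim_G H' = 1$.

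Second, I would check that $\mathcal{H} \cup \{H'\}$ is independent by verifying the codimension criterion of Proposition \ref{pro:matroid}(2). Here $(\mathcal{H} \cup \{H'\})_M = \mathcal{H}_M \cap \RC{H'}$. Since $\RC{H'}$ has codimension one and does not contain $\mathcal{H}_M$, the inclusion--exclusion step from the proof of that proposition yields $\codim_G(\mathcal{H}_M \cap \RC{H'}) = \codim_G \mathcal{H}_M + 1 = |\mathcal{H}| + 1$, which is exactly the size of the enlarged collection, so independence follows.

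Third, for the maximality claim, suppose $H \in \CA$ is such that $\mathcal{H} \cup \{H\}$ is independent. By Proposition \ref{pro:matroid}(2) again, $\mathcal{H}_M \cap \RC{H} \subsetneq \mathcal{H}_M$, equivalently $\RC{H}$ does not contain $\mathcal{H}_M$. Therefore $H$ is itself a candidate in the maximization defining $H'$, so $|H| \leq |H'|$. Since $w(H) = 2 - 1/|H|$ is strictly increasing in $|H|$, we conclude $w(H) \leq w(H')$. The only step requiring real care is Step 1, where one must extract a strictly larger candidate from the replacement lemma; Steps 2 and 3 are essentially bookkeeping using the matroid criterion already established.
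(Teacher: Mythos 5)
Your proposal is correct, and your Steps 2 and 3 coincide with the paper's (which compresses each into a single clause: once $H'_M$ is known to have codimension one and to miss $\mathcal{H}_M$, independence is immediate from Proposition \ref{pro:matroid}(2), and the weight claim follows because $H'$ maximizes $\card{H}$ over a family containing all legitimate competitors). Where you genuinely diverge is Step 1. The paper does not argue by contradiction through Lemma \ref{lem:replace}; instead it chooses a codimension-one $T\in\TT$ containing $H'_M$ but not $\mathcal{H}_M$, sets $H=H'T$, invokes socle-friendliness to get $H_M=H'_MT=T$, and concludes $H=H'$ from maximality, so that $H'_M=T$ is itself a codimension-one flat. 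Your version instead recycles the pair $H_1,H_2$ from the replacement lemma and notes that at least one of them is a strictly larger candidate (the strictness of the containments $H'\subgp H_i$ is indeed guaranteed, both by the lemma's proof and by its displayed inequality). The two arguments rest on the same ingredients --- socle-friendliness and the exchange structure of $\TT$ --- so the difference is mostly aesthetic: the paper's saturation argument identifies $H'_M$ explicitly, while yours avoids any new construction. One small slip to repair: the inference ``$H'\neq G$, so $\codim_G H'\geq 1$'' is not valid as stated, since a proper subgroup can perfectly well have codimension $0$. The correct observation is that any subgroup of codimension $0$ has relative core equal to $\socG\supseteq\mathcal{H}_M$ and therefore is not a candidate at all; hence every candidate, in particular $H'$, has codimension at least $1$. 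This is a one-line fix and does not affect the rest of the argument.
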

\begin{proof}
We can find
$T\in\mathcal{T}$ of codimension $1$ containing $H'_{M}$ but not
containing $\mathcal{H}_{M}$. Setting $H=H'T$ we have $H_{M}=H'_{M}T=T$,
which does not contain $\mathcal{H}_{M}$. By the maximality of $H'$
we have $H=H'$ implying $H'_{M}=T$, so that $H'$ is of codimension
$1$ and $\mathcal{H}\cup\left\{ H'\right\} $ is independent. Finally
$H'$ was chosen to maximize $w(H)$ in an even larger family than
needed.
\end{proof}

We now describe a method to find all perfect minimal faithful permutation representations.
We assume we are given the following data:

\begin{enumerate} \item The subgroup lattice of $G$;
\item the sizes of every element of the subgroup lattice;
\item and that normal subgroups are marked as such.
\end{enumerate}

Then for each $i \geq 0$ we recursively construct a sequence of triples $(\HH_i, T_i, \Delta_i)$ with
each $\HH_i$ a collection of subgroups of $G$, $T_i$ a subgroup of $G$, and $\Delta_i$ a non-negative real number. In order to do this we proceed as follows. Let $\mathcal{H}_0 = \emptyset$, $T_0=\socG$, $\Delta_0=0$. Now suppose
$(\HH_i, T_i, \Delta_i)$ is given, and $T_i \ne \{1\}$. First we find a subgroup $H_{i+1}$ of $G$ of maximal size not containing $T_i$. Then we set $\HH_{i+1} = \HH_i \cup \{ H_{i+1}\}$, $T_{i+1} = T_i \cap \CG{H_{i+1}}$, $\Delta_{i+1} = \Delta_i + \frac{1}{\card{H_{i+1}}}$. If $T_i = \{1\}$, we simply set $(\HH_{i+1}, T_{i+1}, \Delta_{i+1}) = (\HH_i, T_i, \Delta_i)$.
The sequence $(\HH_i, T_i, \Delta_i)$ is certainly not unique and depends on the choices of the subgroups $H_i$.
Then we have the following theorem:

\begin{thm} Let $G$ be a socle friendly finite group, and let $\dim G = \delta$. Then
\label{thm:algo-works}
\begin{enumerate} \item For any choice of the subgroups $H_i$, $T_{\delta-1} \ne \{1 \}$ whereas $T_\delta = \{1\}$. Furthermore, $\HH_\delta$ is a minimal faithful collection of size $\delta$, and $\Delta_\delta = \Delta(G)$.
\item Conversely, up to $G$-isomorphism any minimal faithful collection of size $\delta$ can be obtained this way.
\end{enumerate}
\end{thm}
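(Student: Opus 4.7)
The plan is to recognize the procedure as the standard matroid greedy algorithm with weight function $w(H) = 2 - 1/\card{H}$, once the auxiliary data $T_i$ is properly identified. I would first verify inductively that $T_i = \RC{\HH_i}$: the base case $T_0 = \socG = \RC{\emptyset}$ is immediate, and using $\RC{\HH_i} \subseteq \socG$ in the inductive step gives
\[
T_{i+1} = T_i \cap \CG{H_{i+1}} = \RC{\HH_i} \cap \CG{H_{i+1}} \cap \socG = \RC{\HH_i} \cap \RC{H_{i+1}} = \RC{\HH_{i+1}}.
\]
Since every element of $\TT$ is generated by minimal normal subgroups, $T_i \subseteq H$ is equivalent to $T_i \subseteq \RC{H}$. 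Thus the selection rule ``$H_{i+1}$ has maximal size subject to $H_{i+1} \not\supseteq T_i$'' is precisely the hypothesis of Lemma \ref{lem:simplifying}, which yields for free that $H_{i+1} \in \CA$, that $\HH_{i+1}$ remains independent, and that $H_{i+1}$ maximizes $w(H)$ among all admissible extensions of $\HH_i$.

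For Part (1): Proposition \ref{pro:matroid}(2) forces $\dim_G T_i = \delta - i$, so the algorithm runs for exactly $\delta$ steps with $T_{\delta-1}$ one-dimensional (hence nontrivial) and $T_\delta = \{1\}$. Therefore $\HH_\delta \subset \CA$ is an independent faithful collection of size $\delta$. Because $H_{i+1}$ is a max-weight admissible extension at every stage, the standard greedy theorem for weighted matroids identifies $\HH_\delta$ as a $w$-maximal basis, which by the characterization of minimal faithful collections (the Corollary preceding Definition \ref{defn:perfect}) is itself a minimal faithful collection. By construction $\Delta_\delta = \sum_i 1/\card{H_i} = \Delta(\HH_\delta) = \Delta(G)$.

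For Part (2): let $\HH^*$ be any minimal faithful collection of size $\delta$; by Theorem \ref{thm:codim1-suffice}, $\HH^* \subset \CA$, and by the same Corollary $\HH^*$ is a $w$-maximal basis. I would show by induction on $i$ that the algorithm can be steered so that $\HH_i \subseteq \HH^*$ throughout. Given such $\HH_i$, let $H^\circ \in \HH^* \setminus \HH_i$ be of maximal size; since $\HH^*$ is independent, $H^\circ \not\supseteq T_i$, so $H^\circ$ is an admissible choice. Suppose for contradiction some subgroup $K$ with $\card{K} > \card{H^\circ}$ and $K \not\supseteq T_i$ exists. Lemma \ref{lem:simplifying} places $K$ in $\CA$ and makes $\HH_i \cup \{K\}$ independent. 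Iteratively applying the matroid exchange axiom (Proposition \ref{pro:matroid}(3)) with the basis $\HH^*$, augment $\HH_i \cup \{K\}$ by elements of $\HH^* \setminus \HH_i$ until one reaches a basis $\widetilde{\HH} = (\HH^* \setminus \{H^\dagger\}) \cup \{K\}$ for a unique omitted $H^\dagger \in \HH^* \setminus \HH_i$; then
\[
w(\widetilde{\HH}) - w(\HH^*) = w(K) - w(H^\dagger) \ge w(K) - w(H^\circ) > 0,
\]
contradicting the $w$-maximality of $\HH^*$. Hence $H^\circ$ realizes the greedy maximum and may be chosen as $H_{i+1}$; iterating produces $\HH_\delta = \HH^*$ exactly, which certainly entails $G$-isomorphism.

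I expect the matroid-exchange argument in Part (2) to be the main obstacle. The iterated use of Proposition \ref{pro:matroid}(3) must be carried out with care so that the extension of $\HH_i \cup \{K\}$ draws only on elements of $\HH^* \setminus (\HH_i \cup \{K\})$ and yields a basis differing from $\HH^*$ in exactly one element; only then is the weight inequality against the maximal-size element $H^\circ$ of $\HH^* \setminus \HH_i$ sharp enough to force a contradiction with the minimality of $\HH^*$.
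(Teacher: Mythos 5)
Your proof is correct and follows essentially the same route as the paper's: both treat the procedure as the greedy algorithm for the weight $w(H)=2-\frac{1}{\card{H}}$ on the matroid of independent subcollections of $\CA$, using Lemma \ref{lem:simplifying} to identify the greedy choice and the exchange property of Proposition \ref{pro:matroid}(\ref{enu:mat-replace}) both for optimality and for the converse. The only cosmetic differences are that you invoke the standard weighted-matroid greedy theorem where the paper re-derives it by induction on the size of the partial collection, and that in part (2) you locate the single-swap basis by iterated augmentation where the paper exhibits the swapped-out element $H_l$ explicitly via the chain $S_j=\bigcap_{i\leq j}\RC{H_i}$.
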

\begin{proof}
First we prove the first part. From Lemma \ref{lem:simplifying} it is clear for each $i$ the collection $\HH_i$ is independent, and $T_i = (\HH_i)_M$. Also it is easy to see that for each $i$, $\dim T_{i+1} = \dim T_i -1$ as long as $T_i \ne \{1\}$.
These observations immediately give the first assertion of the theorem.
We show by induction that for $k \leq \delta$,
$\sum_{H\in\HH_k}\frac{1}{\card{H}}$ is minimal among
independent collections of size $k$. This is certainly the case
for $k=0$. Thus let $\HH_{k-1}$ be given, and choose the subgroup $H_k$. Suppose there
is an independent collection $\mathcal{H}'\subset\CA$ of
size $k$ such that
$\sum_{H'\in\mathcal{H}'}\frac{1}{\card{H'}}<\frac{1}{\card{H_{k}}}+\sum_{H\in\HH_{k-1}}\frac{1}{\card{H}}$.
We may then write $\mathcal{H}'=\mathcal{H}''\cup\left\{
H'_{k}\right\} $ where $H'_{k}$ is a member of minimal size.
By the inductive hypothesis,
$\sum_{H\in\HH_{k-1}}\frac{1}{\card{H}}\leq\sum_{H'\in\mathcal{H}''}\frac{1}{\card{H'}}$,
and hence we must have $\card{H_{k}} < \card{H'_{k}}$. By the choice
of $H'_{k}$, we actually have $\card{H_{k}}<\card{H'}$ for all
$H'\in\mathcal{H}'$. We now use the matroid property of the
independent subcollections of $\CA$ shown in Proposition
\ref{pro:matroid}(\ref{enu:mat-replace}): since $\mathcal{H}'$ is of
size $k$, while $\mathcal{H}_{k-1}$ is of size $k-1$, there exists some
$H'\in\mathcal{H}'$ such that $\mathcal{H}_{k-1}\cup\left\{ H'\right\} $
is independent. In particular this implies that
$\left(\mathcal{H}_{k-1}\cup\left\{ H'\right\} \right)_{M}$ is strictly
contained in $\mathcal{H}_{M}$, and as $\card{H'}>\card{H_{k}}$ we
have a contradiction to the existence of $\mathcal{H}'$.

Now we prove the second part.  Let $\mathcal{H}=\left\{ H_{i}\right\} _{i=1}^{\delta}$ be a minimal faithful
collection, ordered such that \[
\card{H_{1}}\geq\card{H_{2}}\geq\cdots\geq\card{H_{\delta}}.\] Then
we claim that each $k$, $H_{k}$ has maximal size among all subgroups $H'$ of $G$
such that $\left(\left\{ H_{i}\right\} _{i=1}^{k-1}\cup\left\{
H\right\} \right)_{M}$ is a proper subgroup of $\left(\left\{
H_{i}\right\} _{i=1}^{k-1}\right)_{M}$.
By induction, it suffices to check that if a subgroup $H'<G$
is independent of $\left\{H_i\right\}_{i=1}^{k-1}$ then
there exists $l\geq k$ such that
$\mathcal{H} \cup \left\{H'\right\} \setminus \left\{H_l\right\}$
is independent.  For this we set $S_j = \cap_{i=1}^{j} \RC{H_i}$.
It is then easy to see that we may take $l$ to be the first $j$
such that $\RC{H'}\cap S_j = S_j$. The assertion of the theorem is now immediate.
\end{proof}

\subsection{The main theorem}

In this section we state and prove our main theorem. We start with a definition:
\begin{defn}
   Let $G$ be a finite group. Given a permutation representation $X$ we denote by $m(X)$ the multi-set consisting of the sizes of the orbits of $X$ under the $G$-action.
\end{defn}

\begin{thm}
\label{thm:mainthm2}Let $G$ be a socle friendly finite group.
Let $X$ be a minimal faithful permutation representation of $G$. Then,
\begin{enumerate}
\item The number of orbits of $X$ under the action of $G$ is at most $\dim G$;
\item $G$ has perfect minimal faithful permutation representations; and
if the center of $G$ has at most one involution then every faithful permutation representation is perfect;
\item If $X_1$, $X_2$ are two perfect minimal faithful representations of $G$, then $m(X_1) = m(X_2)$.
\end{enumerate}
\end{thm}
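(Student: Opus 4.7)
Parts (1) and (2) will be largely assemblies of what has already been proven. For (1), a minimal faithful permutation representation corresponds via point stabilizers to a minimal faithful collection $\HH$, with $|\HH|$ equal to the number of orbits and $\{[G:H]\mid H\in\HH\}$ the multiset of orbit sizes. By Theorem \ref{thm:codim1-suffice} the minimal faithful collections contained in $\CA$ are precisely those of maximum size, and by Proposition \ref{pro:matroid}(2) any independent subset of $\CA$ has at most $\dim G$ elements; hence every minimal faithful collection has at most $\dim G$ elements. For (2), existence of perfect minimal faithful collections is Corollary \ref{cor:card-faith}. The second clause of (2) is the ``at most one central involution'' statement of Theorem \ref{thm:codim1-suffice}: under that hypothesis every minimal faithful collection must lie inside $\CA$, and a faithful independent subset of $\CA$ has codimension zero, so by Proposition \ref{pro:matroid}(2) it contains exactly $\dim G$ elements.

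The real substance lies in part (3). Let $\HH_1,\HH_2$ be the minimal faithful collections corresponding to $X_1,X_2$, both of size $\dim G$. By Proposition \ref{pro:matroid} the independent subcollections of $\CA$ form a matroid, and by the corollary characterizing minimal faithful collections in $\CA$ as weight-maximizing independent sets, a perfect minimal faithful collection is exactly a basis of this matroid which minimizes $\sum_{H\in\HH}1/|H|$, equivalently which maximizes the additive weight $w(H)=2-1/|H|$. The plan is therefore to invoke the classical matroid-theoretic fact that any two maximum-weight bases of a weighted matroid have the same multiset of weights. Since $2-1/|H|$ is a strictly increasing function of $|H|$, this immediately yields $\{|H|\mid H\in\HH_1\}=\{|H|\mid H\in\HH_2\}$ as multisets, and hence $m(X_1)=\{[G:H]\mid H\in\HH_1\}=\{[G:H]\mid H\in\HH_2\}=m(X_2)$.

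The one real step requiring proof is that matroid lemma, which I would establish by the standard exchange argument. Suppose for contradiction that for some threshold $v$ we have $|\HH_1^{\geq v}|>|\HH_2^{\geq v}|$, writing $\HH_i^{\geq v}=\{H\in\HH_i\mid w(H)\geq v\}$. Both are independent in the matroid, so by the augmentation axiom there is some $H^*\in\HH_1^{\geq v}\setminus\HH_2^{\geq v}$ (forcing $H^*\notin\HH_2$) with $\HH_2^{\geq v}\cup\{H^*\}$ independent. Consider the unique circuit of $\HH_2\cup\{H^*\}$ containing $H^*$: since this circuit cannot be contained in the independent set $\HH_2^{\geq v}\cup\{H^*\}$, it must meet $\HH_2\setminus\HH_2^{\geq v}$, giving some $H'\in\HH_2$ with $w(H')<v\leq w(H^*)$. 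Then $\HH_2\cup\{H^*\}\setminus\{H'\}$ is a basis of strictly greater weight, contradicting the optimality of $\HH_2$. I do not foresee any obstacle beyond stating this exchange argument cleanly and verifying that the weight function $w$ is monotone in $|H|$.
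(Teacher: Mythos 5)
Your proposal is correct. Parts (1) and (2) follow the paper's own route (Theorem \ref{thm:codim1-suffice}, Proposition \ref{pro:matroid}, Corollary \ref{cor:card-faith}); the divergence is in part (3). The paper deduces (3) from Theorem \ref{thm:algo-works}: every perfect minimal faithful collection is an output of the greedy construction, whose proof shows that the subgroup chosen at each step has the maximal possible order, so the sorted sequence of orders is forced. You instead bypass the greedy machinery and prove directly the classical fact that any two maximum-weight bases of a matroid have the same multiset of weights, via fundamental circuits and a single exchange; combined with the corollary identifying minimal faithful collections inside $\CA$ with the independent sets maximizing $w(\HH)=\sum_{H}\left(2-1/\card{H}\right)$, and the strict monotonicity of $t\mapsto 2-1/t$, this yields $m(X_1)=m(X_2)$. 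Your exchange argument is sound: the fundamental circuit of $H^*$ relative to $\HH_2$ cannot lie inside the independent set $\HH_2^{\geq v}\cup\{H^*\}$, so it meets $\HH_2\setminus\HH_2^{\geq v}$, and the swap strictly increases the weight. Both routes rest on the matroid structure of Proposition \ref{pro:matroid}, but yours is self-contained and does not need Theorem \ref{thm:algo-works}, whereas the paper obtains the invariance of the orbit-size multiset as a byproduct of the algorithmic result it wants anyway. One point you should make explicit: a perfect minimal faithful collection is automatically contained in $\CA$ (it has no codimension-zero members by minimality, and a member of codimension at least two would, after the replacement of Theorem \ref{thm:codim1-suffice}, produce a minimal faithful collection with more than $\dim G$ elements, which is impossible); this is what licenses treating $\HH_1$ and $\HH_2$ as bases of the matroid on $\CA$.
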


\begin{proof}
  The first two parts of the theorem follow from Corollary \ref{cor:card-faith}. The third part easily follows from
  Theorem \ref{thm:algo-works} and its proof.
\end{proof}

\section{Applications}

\subsection{Accumulation points of $\Delta(G)$}

Let $n,p\in \N$ with $p>n$ a prime.  Then $\Delta(C_n \times C_p) =
\frac{1}{n} + \frac{\Delta(C_n)}{p} = \frac{1}{n} + O(\frac{1}{p})$.
In particular, $\lim_{p\to\infty} \Delta(C_n\times C_p) =
\frac{1}{n}$. This means that for each positive integer $n$, the
point $\frac{1}{n}$ is an accumulation point of the set $\{
\Delta(G); $G$ \text{ finite group}\}$ in the interval $[0, 1]$. In Theorem \ref{thm:limitpts} below we show that these points
are the only non-zero accumulation points. We begin with some
preliminary lemmas.

\begin{lem}
\label{lem:ind-rep}Let $H\subgp G$ be a subgroup. Then $d(H)\leq
d(G)$ and $\Delta(G)\leq\Delta(H)$.
\end{lem}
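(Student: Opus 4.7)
The plan is to handle the two inequalities separately, each by directly constructing an explicit faithful set.

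For the inequality $d(H)\leq d(G)$, I will simply restrict a minimal faithful $G$-action to $H$. If $G$ acts faithfully on a set $X$ with $\card{X}=d(G)$, then so does any subgroup, so $d(H)\leq\card{X}=d(G)$. This step is immediate.

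For the inequality $\Delta(G)\leq\Delta(H)$, I will use the language of faithful collections developed earlier in the paper. Choose a minimal faithful collection $\left\{K_{1},\dots,K_{r}\right\}$ of subgroups of $H$, so that $H$ acts faithfully on $\bigsqcup_{i}H/K_{i}$ and $d(H)=\sum_{i}[H:K_{i}]$. I then propose to consider the $G$-set $Y=\bigsqcup_{i}G/K_{i}$, whose cardinality is
\[
\sum_{i}[G:K_{i}]=[G:H]\sum_{i}[H:K_{i}]=[G:H]\cdot d(H).
\]
The key step is to verify that this enlarged $G$-action is still faithful. The kernel of the $G$-action on $Y$ is $\bigcap_{i}\CG{K_{i}}$, and since the intersection defining $\CG{K_{i}}=\bigcap_{g\in G}gK_{i}g^{-1}$ ranges over a superset of $H$, we have the containment $\CG{K_{i}}\subset\Core_{H}(K_{i})$ for each $i$. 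Consequently
\[
\bigcap_{i}\CG{K_{i}}\ \subset\ \bigcap_{i}\Core_{H}(K_{i})=\{1\},
\]
the last equality being the faithfulness of the chosen collection in $H$. From $d(G)\leq\card{Y}=[G:H]\cdot d(H)$ I divide by $\card{G}=[G:H]\cdot\card{H}$ to obtain $\Delta(G)\leq\Delta(H)$.

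I do not expect any serious obstacle here; the only point requiring a small observation is the containment $\CG{K_{i}}\subset\Core_{H}(K_{i})$, which is the essential content of the argument and is what forces the inequality to go in the correct direction.
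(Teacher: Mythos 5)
Your proof is correct and follows essentially the same route as the paper: both arguments take a minimal faithful collection for $H$, observe that the $G$-cores are contained in the $H$-cores so the collection remains faithful over $G$, and compare the resulting degree counts. The only difference is cosmetic --- you track $\sum_i [G:K_i]$ explicitly while the paper notes that $\sum_i 1/\card{K_i}$ is independent of the ambient group.
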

\begin{proof}
The first claim is obvious. For the second, let $\mathcal{H}'$ be a
faithful collection of subgroups of $H$ and note that
$\Delta(\mathcal{H})$ is independent of the ambient group. Then
$K_{G}(H_{i})\subset K_{H}(H_{i})$ (larger intersection). In
particular, $K_{G}(\mathcal{H})=\left\{ 1\right\} $. Choosing
$\mathcal{H}$ minimal for $H$ we deduce that
$\Delta(G)\leq\Delta(\mathcal{H})=\Delta(H)$.
\end{proof}
\begin{rem}
\label{rem:BGP}A cyclic $p$-group has relative degree $1$. In
particular, if $P\subgp G$ is a cyclic $p$-group then \[
\Delta(G)\geq\frac{d(P)}{\card{G}}=\frac{1}{[G:P]}.\] Conversely,
Babai-Goodman-Pyber \cite{BabaiGoodmanPyber:ConstDelta} give an
explicit function $f\colon[0,1]\to\R$ such that if
$\Delta(G)\geq\Delta$ then $G$ has a cyclic $p$-subgroup of index at
most $f(\Delta)$. In other words, as $\card{G}$ grows with
$\Delta(G)\geq\Delta$, the degree of $G$ is controlled (up to
bounded multiplicative error) by the size of the largest cyclic
$p$-subgroup of $G$.  Specifically, they show that when $G$ does not
possess a large cyclic group of prime-power order it has a pair of
reasonably large subgroups with trivial intersection.

Note that the above bound on $\Delta(G)$ is derived from a faithful
collection of size $2$.  In Lemma \ref{lem:small-dim} we show that
when $\Delta(G) \geq \Delta$ there exists $k$ depending only on
$\Delta$ such that a minimal permutation representation of $G$ has
at most $k$ orbits. The case of groups of prime exponent and
nilpotence class two, studied in \cite[Thm.\
3.6]{BabaiGoodmanPyber:ConstDelta} as well as
\cite{Neumann:PermGpAlg}, shows that we need $k > 2$ in general.
\end{rem}

\begin{lem}\label{lem:small-dim} Let $k = \dim G$. Then
$\Delta(G) \leq \frac{k}{2^{k-1}}$.
\end{lem}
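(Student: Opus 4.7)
The plan is to bound $\Delta(G)$ from above by exhibiting an explicit faithful $G$-set, namely the disjoint union of the coset spaces $G/T_j$ where the $T_j$ are the $k$ natural codimension-one subgroups of $\socG$. Under the general bound $d(G)\le \sum_{H\in\HH}[G:H]$ for a faithful collection $\HH$, this will translate into a sum of $k$ reciprocals which is easy to estimate.

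Concretely, I would begin by invoking the structure of the socle recalled in Section \ref{sec:indiv} to write
$$\socG = N_1 \times N_2 \times \cdots \times N_k,$$
an internal direct product of $k = \dim G$ minimal normal subgroups. For each $j$, set $T_j \eqdef \prod_{i \ne j} N_i$, which is a normal subgroup of $G$ (hence $\CG{T_j} = T_j$) of order $\prod_{i \ne j}\card{N_i}$. Projection onto the $N_j$-coordinate shows that $\bigcap_{j=1}^{k} T_j = \left\{1\right\}$, so $\HH \eqdef \left\{T_1,\ldots, T_k\right\}$ is a faithful collection in the sense of Section \ref{sec:indiv}. The associated $G$-set $\bigsqcup_{j=1}^k G/T_j$ is then faithful of degree $\sum_j [G : T_j]$, giving
$$ \Delta(G) \;\leq\; \sum_{j=1}^k \frac{1}{\card{T_j}}. $$

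To finish, I would use the only numerical input needed: every minimal normal subgroup $N_i$ is nontrivial, so $\card{N_i}\geq 2$. Hence $\card{T_j}=\prod_{i\ne j}\card{N_i}\geq 2^{k-1}$ for every $j$, and summing over $j$ yields $\Delta(G)\leq k/2^{k-1}$. I do not foresee any real obstacle: the argument uses nothing beyond the direct-product decomposition of $\socG$ and the fact that a normal subgroup equals its own core, and in particular applies to an arbitrary finite group with no socle-friendliness or nilpotence hypothesis. (Equality is approached by $G=C_2^k$, showing the bound is sharp.)
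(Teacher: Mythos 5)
Your proposal is correct and follows exactly the paper's own argument: decompose $\socG$ as a direct product of $k$ minimal normal subgroups, take the $k$ subproducts omitting one factor each to get a faithful collection whose members all have order at least $2^{k-1}$, and sum the reciprocals. No issues.
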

\begin{proof}
Write the socle $M=\socG$ as the direct product
of $k$ minimal normal subgroups $\{S_i\}_{i=1}^k$.  For $1 \leq
i\leq k$ let $H_i = \prod_{j\neq i} S_j$. It is clear that $\{H_i\}$
is a faithful collection of size $k$ and each of its elements has
size at least $2^{k-1}$.
\end{proof}

\begin{lem}\label{lem:cyc-p-gp} Let $P$ be a cyclic $p$-subgroup of $G$.
Then $\RC{P} \subgp \soc(P)$.
If $|G|$ is large enough compared to $[G:P]$ then equality holds.
\end{lem}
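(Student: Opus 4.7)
The plan is to handle the two assertions separately. The containment $\RC{P}\subgp\soc(P)$ is purely structural and rests on the fact that subgroups of a cyclic $p$-group are linearly ordered. Equality under the largeness hypothesis will follow from the classical embedding of $G/\CG{P}$ into $S_{[G:P]}$ via the coset action, which supplies a lower bound on $|\CG{P}|$.

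For the first part I would use the description $\RC{P}=\langle N\in\MM(G)\mid N\subset P\rangle$ recorded in Section \ref{sec:indiv}. Any such $N$ is a subgroup of the cyclic $p$-group $P$, hence itself cyclic of $p$-power order; if $|N|\geq p^{2}$, then $N$ possesses a characteristic subgroup of order $p$, which would be a proper nontrivial normal subgroup of $G$ contained in $N$, contradicting the minimality of $N$. Therefore every $N\in\MM(G)$ with $N\subset P$ has order exactly $p$, and because $P$ is cyclic there is at most one such subgroup, namely $\soc(P)$. Hence $\RC{P}$ is either trivial or equal to $\soc(P)$.

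For the reverse inclusion, set $m=[G:P]$ and consider the action of $G$ on $G/P$ by left multiplication; its kernel is precisely $\CG{P}$, so $G/\CG{P}$ embeds into $S_{m}$ and
\[
|\CG{P}|\;\geq\;\frac{|G|}{m!}\;=\;\frac{|P|}{(m-1)!}.
\]
As soon as $|P|>(m-1)!$ (equivalently $|G|>m!$) this forces $\CG{P}$ to be a nontrivial subgroup of the cyclic $p$-group $P$, and so to contain the unique subgroup of $P$ of order $p$, which is $\soc(P)$. Being the characteristic order-$p$ subgroup of the normal subgroup $\CG{P}\ideal G$, $\soc(P)$ is itself normal in $G$, and of prime order it is a minimal normal subgroup. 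Consequently $\soc(P)\subgp\socG$, giving $\soc(P)\subgp\CG{P}\cap\socG=\RC{P}$, and combined with the previous paragraph this yields equality.

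No step represents a serious obstacle: both directions reduce to the uniqueness of a subgroup of each order in a cyclic $p$-group together with the elementary bound on the kernel of the coset action. The only subtlety is making ``large enough'' precise, for which the explicit criterion $|G|>[G:P]!$ suffices.
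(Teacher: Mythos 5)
Your proof is correct and follows essentially the same route as the paper's: the containment comes from the fact that a nontrivial normal subgroup of $G$ inside the cyclic $p$-group $P$ must contain the unique order-$p$ subgroup $\soc(P)$ as a characteristic subgroup, and equality under the largeness hypothesis comes from bounding the index of $\CG{P}$ by $[G:P]!$ via the coset action. Your explicit threshold $|G|>[G:P]!$ is exactly the one the paper uses.
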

\begin{proof}
Let $N<P$ be non-trivial and normal in $G$.  Then $\soc(P)$ is a
characteristic subgroup of $N$. It follows that $\RC{P}$ is either trivial
 or equal to $\soc(P)$. In any case, we have $\dim_G P\leq 1$.

Finally, the core of $P$ has index at most $([G:P])!$ (it is the
kernel of a homomorphism into $S_{[G:P]}$).  If $|G|> ([G : P])!$
then $\CG{P}$ is a non-trivial normal subgroup of $G$ contained in
$P$, hence containing its unique subgroup of order $p$.  In that case
$\soc(P)$ is normal in $G$ and thus $\RC{P} = \soc(P)$.
\end{proof}
In fact, if $G$ has a large cyclic $p$-subgroup then a permutation
representation with two orbits is almost optimal:

\begin{cor}\label{cor:ell} Let $P$ be a cyclic $p$-subgroup of $G$,
and let $l(G)$ be the order of the smallest point stabilizer in an
orbit in a minimal permutation representation of $G$. Then
$$ \frac{1}{l(G)} \leq \Delta(G) \leq \frac{1}{l(G)} + \frac{1}{|P|}.$$
\end{cor}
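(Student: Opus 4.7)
The plan is to fix a minimal permutation representation of $G$ whose orbit stabilizers form the collection $\mathcal{H} = \{H_1, \ldots, H_k\}$. By definition, $l(G) = \min_i \card{H_i}$, and $\Delta(G) = \sum_i \card{H_i}^{-1}$ since the orbit through the coset of $H_i$ has size $[G:H_i]$. The lower bound is then immediate: the summand corresponding to the smallest stabilizer equals $1/l(G)$ and the remaining summands are non-negative, so $\Delta(G) \geq 1/l(G)$.

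For the upper bound, the strategy is to exhibit a faithful representation with just two orbits, one with stabilizer a suitable $H^* \in \mathcal{H}$ and one with stabilizer $P$. We may assume $P$ is nontrivial, since otherwise $1/\card{P}=1$ makes the bound trivial. As $G$ acts faithfully, so does its subgroup $P$; hence the unique order-$p$ subgroup $\soc(P) \leq P$ cannot sit inside $\bigcap_i \CG{H_i} = \{1\}$. Consequently, there exists $H^* \in \mathcal{H}$ with $\soc(P) \not\subseteq \CG{H^*}$, and since $\soc(P)$ has prime order this forces $\soc(P) \cap \CG{H^*} = \{1\}$.

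I next claim that $\{H^*, P\}$ is faithful. The subgroup $\CG{H^*} \cap \CG{P}$ is normal in $G$ and sits inside $\CG{P} \subseteq P$; since the subgroups of the cyclic $p$-group $P$ are linearly ordered, any nontrivial subgroup of $P$ contains $\soc(P)$. The preceding paragraph then forces $\CG{H^*} \cap \CG{P} = \{1\}$. Thus $G$ acts faithfully on $G/H^* \sqcup G/P$, giving $\Delta(G) \leq \card{H^*}^{-1} + \card{P}^{-1}$, and $H^* \in \mathcal{H}$ yields $\card{H^*} \geq l(G)$, completing the upper bound. I do not anticipate any real obstacle: the whole argument rests on the rigidity of cyclic $p$-group subgroups (any nontrivial normal subgroup of $G$ lying in $P$ must contain $\soc(P)$), and everything else is bookkeeping with cores.
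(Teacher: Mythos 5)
Your proof is correct and follows essentially the same route as the paper: both rest on the observation that any nontrivial normal subgroup of $G$ inside the cyclic $p$-group $P$ must contain $\soc(P)$, so pairing $P$ with a member of the minimal faithful collection whose core misses $\soc(P)$ gives a faithful two-orbit representation. Your version merely streamlines the paper's case split on whether $\CG{P}$ is trivial.
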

\begin{proof}
Let $\mathcal{H}$ be a minimal faithful collection for $G$, chosen
so that it contains an element $H_1$ of smallest possible order
(denoted above by $l(G)$). Clearly $\Delta(G) = \Delta(\mathcal{H})
\geq \frac{1}{l(G)}$. For the other assertion, we may as well assume $M(P) \in
\mathcal{M}(G)$, otherwise $\CG{P}=\{1\}$ and the claim is clear.
Then ${\mathcal H}$, being faithful, must contain an element $H_2$ disjoint from $M(P)$, hence $\{P,H_2\}$ is a
faithful collection.
\end{proof}

\begin{thm}\label{thm:limitpts}
Let $G_n$ be a sequence of groups with orders increasing to infinity
such that $\lim_{n\to\infty}\Delta(G_n)>0$. Then this limit is of
the form $1/l$ for some $l\in\N$.
\end{thm}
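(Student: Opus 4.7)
The plan is to extract from each $G_n$ a cyclic prime-power subgroup whose size blows up, and then invoke the sandwich bound of Corollary \ref{cor:ell} to pin $\Delta(G_n)$ to a reciprocal of an integer. Set $\Delta = \lim_{n\to\infty}\Delta(G_n) > 0$, so that $\Delta(G_n) \geq \Delta/2$ for all sufficiently large $n$. By the Babai--Goodman--Pyber structure theorem recalled in Remark \ref{rem:BGP}, each such $G_n$ contains a cyclic $p_n$-subgroup $P_n$ with $[G_n:P_n] \leq f(\Delta/2)$. Since $|G_n| \to \infty$ while the index stays bounded, we get $|P_n|\to\infty$.

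Next I would feed the pair $(G_n, P_n)$ into Corollary \ref{cor:ell} to obtain
$$ \frac{1}{l(G_n)} \;\leq\; \Delta(G_n) \;\leq\; \frac{1}{l(G_n)} + \frac{1}{|P_n|}, $$
where $l(G_n)$ is the order of a smallest point stabilizer in a minimal permutation representation of $G_n$. Since $|P_n|\to\infty$, the error term vanishes in the limit, forcing $1/l(G_n)\to\Delta$.

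To conclude, I would apply a trivial integrality argument: $l(G_n)$ is a sequence of positive integers whose reciprocals tend to $\Delta>0$, so $l(G_n)\leq 2/\Delta$ eventually, and hence the sequence takes only finitely many values. A bounded integer sequence converges if and only if it is eventually constant, so $l(G_n)=l$ for some fixed positive integer $l$ and all large $n$, yielding $\Delta = 1/l$.

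I do not expect any serious obstacle; all heavy lifting is done upstream (the BGP result, Lemma \ref{lem:cyc-p-gp}, and Corollary \ref{cor:ell}), and the only new input is the remark that an integer-valued sequence cannot approach a non-integer limit. The one spot that warrants care is verifying the hypotheses of Corollary \ref{cor:ell}, but since $|P_n|/[G_n:P_n] \to \infty$, the largeness condition used there (cf.\ Lemma \ref{lem:cyc-p-gp}) is satisfied for all sufficiently large $n$.
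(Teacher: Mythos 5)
Your proposal is correct and follows essentially the same route as the paper: invoke the Babai--Goodman--Pyber theorem to get a cyclic $p_n$-subgroup $P_n$ of bounded index (hence $\card{P_n}\to\infty$), apply Corollary \ref{cor:ell} to sandwich $\Delta(G_n)$ between $1/l(G_n)$ and $1/l(G_n)+1/\card{P_n}$, and conclude that the integer sequence $l(G_n)$ is eventually constant. The only cosmetic difference is that the paper writes the error term as $f(\Delta)/\card{G_n}$ rather than $1/\card{P_n}$, which is the same bound.
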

\begin{proof}
For $n$ large enough we have $\Delta(G_n) > \Delta >0$. The main
result of \cite{BabaiGoodmanPyber:ConstDelta}, already quoted above,
is that $G_n$ has a cyclic $p_n$-subgroup $P_n$ of index at most
$f(\Delta)$ for some $f\colon[0,1]\to\N$.  It follows that
$$ \left| \Delta(G_n) - \frac{1}{l(G_n)} \right| \leq
\frac{f(\Delta)}{\card{G_n}}.$$

Here $l(G_n)$ is as in the statement of Corollary \ref{cor:ell}.
As $\card{G_n}\to\infty$, we see that $\frac{1}{l(G_n)}$ tends to a
positive limit.  The sequence of integers $l(G_n)$ must then be
eventually constant, equal to an integer $l$. Corollary \ref{cor:ell}, combined with the fact that the size of $P_n$ goes to infinity, implies that $\lim_{n\to\infty}\Delta(G_n)=\frac{1}{l}$.
\end{proof}

Note that we have shown more, that if $\Delta(G) \geq \Delta > 0$
then any minimal permutation representation consists of one large
orbit of size essentially $\card{G}\Delta(G)$, and several other
orbits of size and number bounded in terms of $\Delta$.  Indeed, the
number of orbits is bounded by Lemma \ref{lem:small-dim}.
We have an obvious bound
$l(G)\leq\left(\Delta(G)-f(\Delta)/\card{G}\right)^{-1}$.
Next, as soon as $\card{G}$ is large enough so that
$\frac{1}{l(G)+1} + \frac{f(\Delta)}{\card{G}} < \frac{1}{l(G)}$,
the subgroups $H_1, H_2$ of Lemma
\ref{lem:cyc-p-gp} must have the same size.  We conclude that
if $\Delta(G) > \Delta$ and $\card{G}$ is large enough (depending on
$\Delta$), $G$ has a cyclic $p$-subgroup $P$ of index at most
$f(\Delta)$ such that $M(P)$ is normal in $G$ and a subgroup $H$ of
order $l(G)$ belonging to a minimal faithful collection and disjoint
from $M(P)$.  Then every other member of that minimal faithful
collection may be replaced with $P$ keeping the collection faithful.
Hence all other orbits in the representation must have size at most
$f(\Delta)$.

\subsection{Some numerical results}

The thesis \cite{BenThesis} contains an implementation of
procedure preceding Theorem \ref{thm:algo-works} in the algebraic programming language
\MAGMA \cite{MAGMAhome}. Using the limited computing power of a personal
computer, $p$-groups of order $p^n$ for $n \leq 6$ and small $p$
were examined. Any such group can be found in the \MAGMA  database.
Let us summarize the findings.

There is only one group $G$ of order $p$, and for this group
$\Delta(G) =1$. There are two groups of order $p^2$, namely $\Z_p
\times \Z_p$ and $\Z_{p^2}$. Here $\Delta(\Z_p \times \Z_p) =
\frac{2}{p}$ and $\Delta(\Z_{p^2}) = 1$. Consequently
$\sum_{|G|=p^2} \Delta(G) = 1 + \frac{2}{p}$. There are five groups
of order $p^3$: one cyclic with $\Delta=1$; one elementary abelian
with $\Delta=\frac{3}{p^{2}}$; one abelian with a generator of order
$p^{2}$, having $\Delta=\frac{1}{p}+\frac{1}{p^{2}}$; and two
non-abelian groups both having $\Delta=\frac{1}{p}$. Observe that
$\sum_{|G|=p^3} \Delta(G)=1+\frac{3}{p}+\frac{4}{p^{2}}$. For groups
of order $p^4$ and $p^5$ we state the following conjecture:
\begin{conj}
For $p>3$
\[
\sum_{|G|=p^{4}}\Delta(G)=1+\frac{5}{p}+\frac{11}{p^{2}}+\frac{9}{p^{3}},\]
\[
\sum_{|G|=p^{5}}\Delta(G)=1+\frac{7}{p}+\frac{34+2\gcd(p-1,3)+
\gcd(p-1,4)}{p^{2}}+\frac{54}{p^{3}}+\frac{24}{p^{4}}.\]
\end{conj}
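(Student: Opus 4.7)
The plan is to prove the conjecture by explicit computation, combining the classical enumerations of groups of order $p^4$ and $p^5$ with the constructive content of Theorem \ref{thm:algo-works}.

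Since every $p$-group is nilpotent and hence socle friendly by Lemma \ref{lem:enlarge-kernel}, and since for odd $p$ the center contains no involution, Theorem \ref{thm:mainthm2} guarantees that every minimal faithful permutation representation of such a group $G$ is perfect, with exactly $\dim G = \dim_{\FF_p}\socG$ orbits. Consequently $\Delta(G)=\sum_{i=1}^{\dim G} 1/\card{H_i}$, where $(H_i)$ is any sequence produced by the greedy procedure of Theorem \ref{thm:algo-works}. Each $\card{H_i}$ is a power of $p$, so $\Delta(G)$ is a polynomial in $1/p$ with non-negative integer coefficients and exactly $\dim G$ non-zero terms.

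For $p\geq 5$ both classifications are available (the order-$p^4$ case, due to Burnside, and the order-$p^5$ case, completed by James). For each isomorphism type $G$ I would (i) identify $Z(G)$ and its $p$-rank; (ii) determine $\Phi(G)\cap\socG$, since any maximal subgroup of $G$ contains $\Phi(G)$ and can therefore avoid a subgroup $T\subset\socG$ only when $T\not\subset\Phi(G)$; (iii) execute the greedy algorithm, recording at each step whether $H_i$ may be taken of index $p$ in $G$ (possible while the current relative core $T_{i-1}$ is not contained in $\Phi(G)$) or must be taken of smaller order; and (iv) read off $\Delta(G)$ as a polynomial in $1/p$. Summing over isomorphism types then yields the two claimed identities. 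Abelian $G$ are handled immediately via the invariant factor decomposition, exactly as illustrated in the excerpt for $p^3$; the non-abelian types are accessible from their standard presentations.

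The principal obstacle is the order-$p^5$ case, where several families of non-abelian groups are parametrized by characters $\FF_p^{\times}\to\FF_p^{\times}$ of order dividing $3$ or $4$. Such characters exist precisely when $3\mid p-1$ (respectively $4\mid p-1$), which accounts for the coefficient $34+2\gcd(p-1,3)+\gcd(p-1,4)$ of $1/p^2$: within each such family all members share a common value of $\Delta(G)$, and the number of isomorphism classes in the family equals the relevant $\gcd$, so these contributions enter the sum with precisely the expected multiplicity. Checking that all remaining families contribute terms polynomial in $1/p$ with coefficients independent of $p$, and that the bookkeeping adds up across all isomorphism types, is the bulk of the remaining work; a final consistency check against the \MAGMA computations summarized in \cite{BenThesis} for small primes would rule out arithmetic mistakes in the enumeration.
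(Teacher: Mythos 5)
The statement you are addressing is labelled a \emph{Conjecture} in the paper, and the authors do not prove it: they offer only the remark that the fifteen groups of order $p^4$ ``can be enumerated and described'' so that ``the proof of the first part of the conjecture should be straightforward,'' together with machine verification for $3<p<50$ (and some larger $p$) in the $p^4$ case and for $p\le 19$ in the $p^5$ case. Your proposal is therefore not competing with a proof in the paper; it is an outline of the computation the authors themselves envision. The reduction you set up is sound and is the right use of the paper's machinery: for odd $p$ every $p$-group is socle friendly (Lemma \ref{lem:enlarge-kernel}) and has no central involution, so by Theorem \ref{thm:mainthm2} every minimal faithful collection is perfect of size $\dim G$, and $\Delta(G)=\sum_i 1/\card{H_i}$ can be computed by the greedy procedure of Theorem \ref{thm:algo-works}. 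Your observation that one need only track how the relative core interacts with the Frattini subgroup to decide when $H_i$ can be taken maximal is a genuinely useful refinement.

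However, as written this is a plan, not a proof: the entire content of the conjecture lives in the case-by-case execution that you defer. For order $p^4$ that is fifteen groups; for order $p^5$ it is $61+2p+2\gcd(p-1,3)+\gcd(p-1,4)$ groups, including families whose size grows linearly in $p$, and you must actually verify (not merely ``check'') that each such family contributes a $p$-independent polynomial after summing over its $\sim p$ members, and that the gcd-indexed families each consist of groups with a common $\Delta$. None of this is carried out, and the claim that the $\gcd$ coefficients arise exactly as you describe is asserted rather than derived from James's classification. Two smaller inaccuracies: $\Delta(G)$ is a sum of $\dim G$ terms of the form $1/\card{H_i}$, but as a polynomial in $1/p$ it need not have ``exactly $\dim G$ non-zero terms'' (e.g.\ $\FF_p^3$ gives $3/p^2$, a single monomial); and its coefficients sum to $\dim G$ only when the $\card{H_i}$ are counted with multiplicity. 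Finally, a consistency check against the \MAGMA data of \cite{BenThesis} is evidence, not proof. In short: correct strategy, essentially the one the authors intend, but the statement remains unproved until the enumeration is actually performed.
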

For any prime $p\ge3$, there are exactly fifteen groups of order
$p^{4}$, and these can be enumerated and described. So the proof of
the first part of the conjecture should be straightforward. We have
computationally verified the conjecture for groups of order $p^4$ for every
prime $p$ in the range $3 < p < 50$ and several larger values of $p$ ($\approx
1000$). We considered the groups of order $p^5$ for $p \leq 19$. Note
that the number of groups of order $p^5$ is
$61+2p+2\gcd(p-1,3)+\gcd(p-1,4)$. For groups of
order $p^6$, we did not have enough data points to be able to guess
a formula.

\section*{Acknowledgments}

We would like to acknowledge conversations with John Conway, William
Kantor, Avinoam Mann and James Wilson.  Our interest in minimal
permutation representations was triggered by a question raised by
Andre Kornell in a Princeton undergraduate Algebra class.  During
the initial investigation, the third author was assisted by Evan
Hass, another student in that class. Theorem \ref{thm:algo-works} was initially
obtained for $p$-groups by the first author under the supervision of the third
author (\cite{BenThesis}).  In an earlier draft of this paper we had claimed that every
finite group is socle friendly (Definition \ref{defn:socle-friendly}). We wish to thank Neil Saunders
for pointing out this inaccuracy and constructing a counterexample (\cite{saunders}; also c.f. Remark \ref{saunders} where a simplification of Saunders' example is presented.). We also thank two anonymous referees for their careful reading of the manuscript and their many
suggestions which led to considerable improvement in the style and
presentation of the paper.

The second author's research was supported in part by a Porter Ogden
Jacobus Fellowship at Princeton University, a Clay Mathematics Foundation
Liftoff Fellowship, and the National Science Foundation under agreement
No. DMS-0111298. The third author's research was partially funded by the Young Investigator
Grant No. 215-6406 from the NSA and by the National Science Foundation grant No. DMS-0701753.

Any opinions, findings and conclusions or recommendations expressed
in this material are those of the authors and do not necessarily
reflect the views of the NSF and the NSA.

\providecommand{\bysame}{\leavevmode\hbox to3em{\hrulefill}\thinspace}
\providecommand{\MR}{\relax\ifhmode\unskip\space\fi MR }
\providecommand{\MRhref}[2]{%
  \href{http://www.ams.org/mathscinet-getitem?mr=#1}{#2}
}
\providecommand{\href}[2]{#2}


\begin{thebibliography}{10}

\bibitem{BabaiGoodmanPyber:ConstDelta}
L{\'a}szl{\'o} Babai, Albert~J. Goodman, and L{\'a}szl{\'o} Pyber, \emph{On
  faithful permutation representations of small degree}, Comm. Algebra
  \textbf{21} (1993), no.~5, 1587--1602.

\bibitem{Berkovich:DegreeIndex}
Yakov Berkovich, \emph{The degree and index of a finite group}, J. Algebra
  \textbf{214} (1999), no.~2, 740--761.

\bibitem{Cameron:PermGps}
Peter~J. Cameron, \emph{Permutation groups}, London Mathematical Society
  Student Texts, vol.~45, Cambridge University Press, Cambridge, 1999.

\bibitem{Cayley:PermRepn}
Arthur Cayley, \emph{Desiderate and suggestions: No. 1. the theory of groups},
  Amer. J. Math. \textbf{1} (1878), no.~1, 50--52.

\bibitem{dixonmortimer}
John~D. Dixon and Brian Mortimer, \emph{Permutation groups}, Graduate Texts in
  Mathematics, vol. 163, Springer-Verlag, New York, 1996.

\bibitem{BenThesis}
Benjamin Elias, \emph{Minimally faithful group actions and $p$-groups}, 2005,
  Princeton University Senior Thesis.

\bibitem{Johnson:MinPermRepn}
D.~L. Johnson, \emph{Minimal permutation representations of finite groups},
  Amer. J. Math. \textbf{93} (1971), 857--866.

\bibitem{Neumann:PermGpAlg}
Peter~M. Neumann, \emph{Some algorithms for computing with finite permutation
  groups}, Proceedings of groups---St.\ Andrews 1985 (Cambridge), London Math.
  Soc. Lecture Note Ser., vol. 121, Cambridge Univ. Press, 1986, pp.~59--92.

\bibitem{saunders}
Neil Saunders, \emph{Private communication}, 09/01/2007.

\bibitem{Suzuki:I}
Michio Suzuki, \emph{Group theory. {I}}, Grundlehren der Mathematischen
  Wissenschaften, vol. 247, Springer-Verlag, Berlin, 1982.

\bibitem{Suzuki:II}
\bysame, \emph{Group theory. {II}}, Grundlehren der Mathematischen
  Wissenschaften, vol. 248, Springer-Verlag, New York, 1986.

\bibitem{MAGMAhome}
MAGMA Computational~Algebra System, \emph{http://magma.maths.usyd.edu.au/}.

\end{thebibliography}
\end{document}